\newtheorem{theorem}{Theorem}[section]
\newtheorem{lemma}[theorem]{Lemma}
\newtheorem{proposition}[theorem]{Proposition}
\newtheorem{corollary}[theorem]{Corollary}
\newtheorem{conjecture}[theorem]{Conjecture}
\theoremstyle{definition}
\newtheorem*{conjecture*}{Conjecture}
\newtheorem{remark}[theorem]{Remark}
\newenvironment{proofrecursion}{\noindent {\bf Proof of Theorem \ref{recursion}:}}{\qed \par\vspace{\baselineskip}}
\newcommand{\becircled}{\mathaccent "7017}
\newcommand{\excise}[1]{}
\newcommand{\Spec}{\operatorname{Spec}}
\newcommand{\id}{\operatorname{id}}
\renewcommand{\dim}{\operatorname{dim}}
\newcommand{\Sym}{\operatorname{Sym}}
\renewcommand{\and}{\qquad\text{and}\qquad}
\newcommand{\Ind}{\operatorname{Ind}}
\newcommand{\Res}{\operatorname{Res}}
\newcommand{\Hom}{\operatorname{Hom}}
\newcommand{\Z}{\mathbb{Z}}
\newcommand{\R}{\mathbb{R}}
\newcommand{\C}{\mathbb{C}}
\newcommand{\IH}{I\! H}
\newcommand{\cA}{\mathcal{A}}
\newcommand{\la}{\lambda}
\newcommand{\cs}{\C^\times}
\newcommand{\Conf}{\operatorname{Conf}}
\newcommand{\Con}{\operatorname{Conf}(n,\R^3)}
\newcommand{\Consm}{\operatorname{Conf}(n,G)/G}
\newcommand{\ch}{\operatorname{ch}}
\newcommand{\ag}{\Aut(\Gamma)}
\newcommand{\Aut}{\operatorname{Aut}}
\newcommand{\hG}{\hat{\Gamma}}
\newcommand{\startarray}{\begin{eqnarray*}}
\renewcommand{\endarray}{\end{eqnarray*}}
\begin{document}
\spacing{1.2}
\noindent{\Large\bf The Orlik-Terao algebra and the cohomology of configuration space}\\

\noindent{\bf Daniel Moseley}\\
Department of Mathematics, Jacksonville University, Jacksonville, FL 32211\\

\noindent{\bf Nicholas Proudfoot}\footnote{Supported by NSF grant DMS-0950383.}\\
Department of Mathematics, University of Oregon,
Eugene, OR 97403\\

\noindent{\bf Ben Young}\\
Department of Mathematics, University of Oregon,
Eugene, OR 97403\\
{\small
\begin{quote}
\noindent {\em Abstract.}
We give a recursive algorithm for computing the Orlik-Terao algebra of the Coxeter arrangement of type $A_{n-1}$
as a graded representation of $S_n$, and we give a conjectural description of this representation in terms of the cohomology
of the configuration space of $n$ points in $SU(2)$ modulo translation.  We also give a version of this conjecture
for more general graphical arrangements.
\end{quote} }

\section{Introduction}
We consider the subalgebra $OT_n$ of rational functions on $\C^n$ generated by
$\frac{1}{x_i-x_j}$ for all $i\neq j$.  This is a special case of a class of algebras called Orlik-Terao algebras,
which have received much recent attention \cite{Terao-OT,PS,
ST-OT,Schenck-OT,DR-OT,SSV,DGS-OT,Dinh-OT,Liu-OT,McBP,EPW}.
Our interest is in understanding $OT_n$ as a graded representation of the symmetric group $S_n$, which acts by
permuting the indices.

Let $C_n$ be the cohomology of the configuration space of $n$ labeled points in $\R^3$, which is also acted on by $S_n$.
The ring $C_n$ is related to $OT_n$ in two different ways.  The first is that $C_n$ is isomorphic to the quotient
of $OT_n$ by the ideal generated by the squares of the generators.  This can be seen explicitly by computing presentations
of the two rings, but there is also a much deeper geometric explanation.  
Braden and the second author proved that $OT_n$ is isomorphic to the equivariant
intersection cohomology of a certain hypertoric variety (Theorem \ref{bp}), 
and $C_n$ is isomorphic to the equivariant cohomology
of a certain smooth open subset of that hypertoric variety; the map from $OT_n$ to $C_n$ is simply the restriction
map in equivariant intersection cohomology.  By exploring this geometric relationship further and considering not only
the open subset in question but also other strata of higher codimension, we obtain a formula which allows us to recursively
compute $OT_n$ in terms of $C_n$ (Theorem \ref{recursion}).  Since the action of $S_n$ on $C_n$ is well understood,
this allows us to compute the action of $S_n$ on $OT_n$ for arbitrary $n$.

Once we do these computations, a different and {\em a priori} unrelated relationship between $OT_n$ and $C_n$
becomes apparent.  Let $R_n$ be the symmetric algebra of the irreducible permutation representation of $S_n$,
generated in degree two.
The ring $OT_n$ is naturally an algebra over $R_n$, and it is finitely generated and free as a graded module.
Thus we may define $M_n := OT_n\otimes_{R_n}\C$, and we have an $S_n$-equivariant isomorphism
$OT_n\cong R_n\otimes_\C M_n$.  This reduces the problem of understanding $OT_n$ to the problem of understanding $M_n$.
Let $D_n$ be the cohomology of the configuration space of $n$ labeled points in $SU(2)\cong S^3$ modulo the
action of $SU(2)$ by simultaneous left translation.  
It is easy to show that $C_n$ and $D_n$ are closely related; see Propositions \ref{cd} and \ref{cdw} for precise
statements.
Our computations suggest the following result, which is the main conjecture in this paper (Conjecture \ref{main}):
\begin{conjecture*}[] There exists an isomorphism of graded $S_n$ representations
$M_n \cong D_n.$
\end{conjecture*}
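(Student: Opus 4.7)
\medskip
\noindent\textbf{Proof proposal.} The plan is to identify $M_n$ and $D_n$ with the cohomology of related geometric objects and compare them either directly or via parallel recursions.

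\emph{Geometric identification of $M_n$.} By Theorem \ref{bp}, $OT_n \cong \IH^*_T(Y;\C)$ for the hypertoric variety $Y$ attached to the braid arrangement, and the subring $R_n\subset OT_n$ corresponds to $\H^*_T(\mathrm{pt})$. Since $OT_n$ is a free $R_n$-module, the standard equivariant-formality argument produces an $S_n$-equivariant graded isomorphism
\[M_n \;=\; OT_n\otimes_{R_n}\C \;\cong\; \IH^*(Y;\C).\]
The conjecture is therefore equivalent to an $S_n$-equivariant graded isomorphism $\IH^*(Y;\C)\cong D_n$. Because $C_n$ is already the equivariant cohomology of a smooth open subvariety $Y^\circ\subset Y$ and $D_n$ is tightly linked to $C_n$ by Propositions \ref{cd} and \ref{cdw}, one expects $D_n$ to correspond to a stratum of $Y$, or to a moduli space closely bound to $Y$, whose (intersection) cohomology recovers all of $\IH^*(Y)$.

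\emph{Finding the bridge.} To realize this expectation I would look for a moduli-theoretic model of $Y$ in which $SU(2)$-configurations appear naturally. The hypertoric variety $Y$ carries a hyperk\"ahler structure, and $SU(2)=\mathrm{Sp}(1)$ enters as the hyperk\"ahler rotation group; its Lie algebra is $\R^3$, which is precisely the ambient space for the $C_n$-configurations. It is therefore plausible that $\Conf(n,SU(2))/SU(2)$ parametrizes a compact analogue of a moduli problem whose affine version recovers $Y$ (or a union of strata of $Y$). An $S_n$-equivariant identification at the level of (intersection) cohomology could then be extracted via decomposition-theorem or twistor-family arguments applied to the resulting correspondence.

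\emph{Recursion as a backup, and main obstacle.} A more elementary alternative is to derive an $S_n$-equivariant recursion for $D_n$ by stratifying an $S_n$-equivariant compactification (e.g.\ a Fulton--MacPherson-type model) of $\Conf(n,SU(2))/SU(2)$ according to which subsets of points collide; applying $-\otimes_{R_n}\C$ to Theorem \ref{recursion} produces a parallel recursion for $M_n$, and the conjecture would follow by induction on $n$ once the two recursions are matched term by term. The principal obstacle, in either route, is the $S_n$-equivariance. Geometrically one must bridge the complex-algebraic object $Y$ and the real-differential object $\Conf(n,SU(2))/SU(2)$ by a map that is manifestly $S_n$-equivariant, while in the recursive approach the $S_n$-action on $D_n$ is obscured the moment one pins a point in $SU(2)$ to exhibit $D_n$ as $C_{n-1}$-like data. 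I expect the bulk of the work to lie in arranging the combinatorial bookkeeping so that the two recursions (or, equivalently, the two geometric models) can be matched as $S_n$-representations, not merely as graded vector spaces.
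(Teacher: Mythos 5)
This statement is Conjecture~\ref{main}: the paper does \emph{not} prove it. The authors verify it on a computer up to $n=10$ (via Proposition~\ref{symmetric}) and explicitly flag why a proof is hard: although both $M_n$ and $D_n$ admit descriptions in terms of $C_n$, the recursive formula for $M_n$ involves plethysms of symmetric functions, which ``it is notoriously difficult to use\ldots to prove anything.'' There is therefore no proof in the paper for your proposal to be compared against, and your proposal must stand alone as an argument. It does not: it is a research outline with two speculative routes, neither of which is carried out, and you yourself identify (correctly) that the crux --- matching things as $S_n$-representations rather than merely as graded vector spaces --- is exactly where both routes stall.

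Concretely, the gaps are these. The ``hyperk\"ahler rotation'' bridge between the hypertoric variety $X_n$ and $\Conf(n,SU(2))/SU(2)$ is pure heuristic; no map, correspondence, or cohomological comparison is constructed, and the paper contains nothing of the sort (the geometry in Section~\ref{geometry} is used only to obtain the recursion for $OT_n$). The recursive route is closer to what one would actually try, but it is also incomplete: you do not derive an $S_n$-equivariant recursion for $D_n$ from a compactification, and you do not match it against Theorem~\ref{recursion}. Moreover, the step ``applying $-\otimes_{R_n}\C$ to Theorem~\ref{recursion}'' is not well-posed as stated, since the isomorphism in Theorem~\ref{recursion} is of graded $S_n$-representations, not of $R_n$-modules, so there is no $R_n$-module structure on the right-hand side to tensor down; the paper instead extracts $M_n$ from $OT_n$ by a degree-truncation argument (see the remark following Theorem~\ref{recursion}). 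The correct takeaway is that after converting to Frobenius characteristics one is left with the plethystic identity implicit in Proposition~\ref{symmetric} versus the analogous expansion of $\ch D_n$ obtainable from Equation~\eqref{cn} and Proposition~\ref{cdw}, and proving these agree is precisely the open problem the authors pose.
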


Given that we have descriptions of both $M_n$ and $D_n$ in terms of $C_n$, one would think that this conjecture
would be easy to prove.  However, our recursive formula for $M_n$ involves plethysms of symmetric functions,
and while plethysms are fine for computing in SAGE, it is notoriously difficult to use them to prove anything.
\\

Our paper is structured roughly in the reverse of the order in which it was presented above.  
We begin in Section \ref{sec:conjectures} by giving a detailed account of our main conjecture, without any
discussion of how to compute $OT_n$ and $M_n$.  We also generalize our conjecture to arbitrary graphs.  
In Section \ref{geometry}, we explain how to use the equivariant intersection cohomology
of hypertoric varieties to compute $OT_n$.  Our main result in this section is Theorem \ref{recursion},
but we also we also do some extra work to translate our recursive formula to the language of symmetric functions
(Proposition \ref{symmetric}), since this is the most convenient formulation for actually computing with SAGE.
All of the code that was used for this project is available at \url{https://github.com/benyoung/ot}.

\vspace{\baselineskip}
\noindent
{\em Acknowledgments:}
The authors are grateful to Nick Addington, Richard Stanley, Dan Petersen, and Ben Webster for their valuable suggestions
and conversations.
All computations were performed with the assistance of the computer algebra systems SAGE~\cite{sage} 
and Macaulay 2 \cite{M2}.

\section{Conjectures}\label{sec:conjectures}
We begin by introducing the main players in our paper:  the Orlik-Terao algebra $OT_n$ and its finite dimensional
quotient $M_n$ (Section \ref{sec-ot}), the cohomology rings $C_n$ and $D_n$ of two closely related configuration spaces (Section \ref{sec-bn}), and our main conjecture relating them (Section \ref{sec-main}).  
We also generalize our conjecture to arbitrary graphs (Section \ref{graphs}).

\subsection{The Orlik-Terao algebra}\label{sec-ot}
Fix a positive integer $n$, and let $OT_n$ be the subalgebra of rational functions on $\C^n$ generated by
the elements $e_{ij} := \frac{1}{x_i-x_j}$ for all $i\neq j$.  
This algebra is known as the {\bf Orlik-Terao algebra} of the Coxeter arrangement of type $A_{n-1}$.
It follows from \cite[Theorem 4]{PS} and \cite[Proposition 2.7]{ST-OT}
that the ideal of relations between these generators
is generated by $e_{ij} + e_{ji}$ for all $i,j$
and $e_{ij}e_{jk} + e_{jk}e_{ki} + e_{ki}e_{ij}$ for all distinct triples $i,j,k$.
We regard $OT_n$ as a graded ring with $\deg(e_{ij}) = 2$.  Our goal is to understand $OT_n$
as a graded representation of the symmetric group $S_n$, which acts by permuting the indices.

Let $R_n := \C[z_1,\ldots,z_n]/\langle z_1+\cdots+z_n\rangle$, with its natural $S_n$ action,
and graded by putting $\deg(z_i) = 2$.
Consider the $S_n$-equivariant graded algebra homomorphism $\varphi_n:R_n\to OT_n$ taking $z_i$ to $\sum_{j\neq i} e_{ij}$.
This gives $OT_n$ the structure of a graded module over $R_n$, and it is in fact a free module by
\cite[Proposition 7]{PS}.  In other words, if we define $M_n := OT_n \otimes_{R_n}\C$ to be 
the ring obtained by setting $\varphi_n(z_i)$ equal to zero for all $i$, then there exists an isomorphism of graded 
$R_n$-modules 
\begin{equation*}\label{free}
OT_n\cong R_n\otimes_\C M_n.\end{equation*}
This isomorphism is not canonical, and is not compatible with the ring structures on the two sides.
However, it is compatible with the action of $S_n$ on both sides.
Thus, we may reduce the problem of understanding $OT_n$ 
as a graded representation to the problem of understanding $M_n$.


\begin{remark}\label{PoinMn}
It is easy to describe $M_n$ as a graded vector space.
For any finite dimensional graded vector space $V$ concentrated in even degree, let $H(V, q) := \sum q^i \dim V_{2i}$,
where $V_{2i}$ is the degree $2i$ part of $V$.  Then $H(M_n, q)$ 
is equal to the $h$-polynomial of the broken circuit
complex associated with the Coxeter arrangement of type $A_{n-1}$ \cite[Proposition 7]{PS}, which is equal to 
$(1+q)(1+2q)\cdots(1+(n-2)q)$.
\end{remark}

The following proposition was proved
by the first author \cite[Theorem 3.10]{Mos-thesis}; it may also be deduced from \cite[Theorem 3.3.3]{CEF}.

\begin{proposition}\label{mn-rep-st}
The sequence $\{M_n\}$ of graded representations
of symmetric groups is representation stable.
\end{proposition}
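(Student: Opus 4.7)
The plan is to realize $\{M_n\}$ as arising from a finitely generated FI-module and apply the main representation stability theorem of \cite{CEF}.

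First, I would put an FI-algebra structure on $\{OT_n\}$: for each injection $f\colon [m]\hookrightarrow [n]$, the assignment $e_{ij}\mapsto e_{f(i)f(j)}$ respects the antisymmetry and three-term relations of \cite{PS,ST-OT} and so extends to a graded $\C$-algebra homomorphism $OT_m\to OT_n$. In each fixed degree $2k$, any element of $(OT_n)_{2k}$ is a polynomial of degree $k$ in the $e_{ij}$'s, involving at most $2k$ indices, and therefore lies in the image of $(OT_{2k})_{2k}$ under some FI-morphism. Hence $(OT)_{2k}$ is a finitely generated FI-module, and by \cite{CEF} the sequence $\{(OT_n)_{2k}\}$ is representation stable for each $k$.

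To transfer representation stability to $\{M_n\}$, I would use the isomorphism $OT_n\cong R_n\otimes_\C M_n$ of graded $S_n$-representations. Taking graded characters in the representation ring, we obtain a factorization $\chi_{OT_n}(q) = \chi_{R_n}(q)\cdot\chi_{M_n}(q)$. Now $R_n = \Sym(V_n)$, where $V_n$ is the standard $(n-1)$-dimensional irreducible of $S_n$, and the graded character of $\Sym(V_n)$ is a classical rep-stable sequence (indeed one of the motivating examples in \cite{CEF}). Dividing in the ring of class functions degree by degree, $\chi_{M_n}(q)$ inherits representation stability from the quotient of two stable sequences whose denominator is explicitly invertible order by order.

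The main obstacle is justifying the "division" step cleanly. The ideal $J_n=\ker(OT_n\twoheadrightarrow M_n)$ generated by $\{\varphi_n(z_i)\}$ is \emph{not} preserved by the FI-structure on $\{OT_n\}$ described above, so $\{M_n\}$ does not directly inherit a finitely generated FI-module structure. One must therefore either (i) pass through the character calculation as above and verify formally that the quotient of stable characters by $\chi_{R_n}(q)$ is again stable, or (ii) as in \cite{Mos-thesis}, construct a more refined categorical structure on $\{M_n\}$ by hand, which effectively carries out the same plethystic division. Both routes require some care with the stabilization maps, and this is essentially of a piece with the plethystic considerations arising from the recursion developed in Section \ref{geometry}.
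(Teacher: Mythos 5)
The paper does not actually supply a proof here; it attributes the result to the first author's thesis and notes that it can also be deduced from \cite[Theorem 3.3.3]{CEF}. So your proposal cannot be compared against a proof in the paper; it has to stand on its own.

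Your two structural observations are both correct and to the point: the assignment $e_{ij}\mapsto e_{f(i)f(j)}$ does make $\{OT_n\}$ into a graded FI-algebra with $(OT)_{2k}$ generated in FI-degree $\leq 2k$, and the ideal $\langle\varphi_n(z_i)\rangle$ is genuinely \emph{not} an FI-submodule (the insertion maps send $\sum_{j\in[m],\,j\neq i}e_{f(i)f(j)}$ to a partial sum, not to $\varphi_n(z_{f(i)})$), so $\{M_n\}$ does not naively inherit an FI-module structure. Where the proposal is too thin is precisely the ``division'' step. The content behind ``the denominator is explicitly invertible order by order'' is the recursion in the graded representation ring
\begin{equation*}
(M_n)_{2k} \;=\; (OT_n)_{2k} \;-\; \sum_{i=1}^{k}\,(R_n)_{2i}\otimes_\C (M_n)_{2(k-i)},
\end{equation*}
and to turn it into an argument you should make this explicit and induct on $k$: multiplicity stability (with uniformly bounded weight) for $\{(OT_n)_{2i}\}$ comes from finite generation as an FI-module, for $\{(R_n)_{2i}\}\cong\{\Sym^i(V_n)\}$ it is classical, and for $\{(M_n)_{2j}\}$ with $j<k$ it is the inductive hypothesis; then Murnaghan's stability for Kronecker products gives multiplicity stability of each summand, hence of the (genuine) representation $(M_n)_{2k}$, and weight bounds propagate since tensor products and differences of bounded-weight sequences have bounded weight. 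Without spelling this out, ``divide characters'' is not a proof. One further caveat: if ``representation stable'' is meant in the Church--Farb sense requiring a consistent sequence of maps $M_n\to M_{n+1}$, this argument only delivers multiplicity stability, and you would still need to exhibit such maps; since $\{M_n\}$ is not an FI-module in the obvious way, that is an additional step you have not addressed.
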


\subsection{Two configuration spaces}\label{sec-bn}
Consider the configuration space $\Con$ be the configuration space of $n$ labeled points in $\R^3$, which admits an action of $S_n$
given by permuting the labels.  
Let $$C_n:= H^*(\Con; \C),$$ which is a graded representation of $S_n$.
The ring $C_n$ has a presentation closely related to that of $OT_n$; it is isomorphic to the quotient
of $OT_n$ by the ideal generated by $e_{ij}^2$ for all $i,j$ \cite[Chapter III, Lemma 7.7]{Cohen}.  This algebra is also known as
the {\bf Artinian Orlik-Terao algebra} of the Coxeter arrangement of type $A_{n-1}$.
The structure of $C_n$ as a graded representation of $S_n$ is complicated but well understood;
see Equation \eqref{cn}.

Next, let $G = SU(2) \cong S^3$, and  consider the configuration space $\Conf(n,G)/G$ of $n$ labeled points in $G$
up to simultaneous translation by left multiplication.
This space admits an action of $S_n$ by permuting the labels;
let $$D_n := H^*(\Consm; \C),$$ which is a graded representation of $S_n$.

\begin{proposition}\label{cd}
There exists an isomorphism
$$C_{n-1} \cong \Res_{S_{n-1}}^{S_n}\!(D_{n})$$
of graded representations of $S_{n-1}$.
\end{proposition}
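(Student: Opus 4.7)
The plan is to produce an $S_{n-1}$-equivariant homeomorphism between $\Conf(n-1,\R^3)$ and $\Conf(n,SU(2))/SU(2)$, and then pass to cohomology.

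The key observation is that $SU(2)$ acts freely on $\Conf(n,SU(2))$ by simultaneous left translation, so we can pick a section of the quotient map by demanding that the first point of the configuration be the identity element $e\in SU(2)$. This identifies $\Conf(n,SU(2))/SU(2)$ with the subspace
\[
\bigl\{(e,g_2,\ldots,g_n) \;\big|\; g_i \ne e,\; g_i\ne g_j \text{ for } i\ne j\bigr\},
\]
which is naturally $\Conf(n-1,SU(2)\smallsetminus\{e\})$. Next I would use the standard fact that $SU(2)\cong S^3$ and stereographic projection from $e$ identifies $SU(2)\smallsetminus\{e\}$ with $\R^3$. Applied pointwise to configurations, this gives a homeomorphism $\Conf(n-1,SU(2)\smallsetminus\{e\})\cong \Conf(n-1,\R^3)$.

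It remains to check equivariance. The $S_n$-action on $\Conf(n,SU(2))/SU(2)$ permutes labels; for a permutation $\sigma\in S_n$ that does not fix $1$, one must follow this permutation by the unique left translation that returns the first point to $e$, so the formula for the $S_n$-action in the section coordinates is somewhat awkward in general. However, for $\sigma$ in the stabilizer $S_{n-1}$ of the label $1$, no compensating translation is needed, and the induced action on $\Conf(n-1,SU(2)\smallsetminus\{e\})$ is simply the permutation of the $n-1$ labels. Since the homeomorphism $SU(2)\smallsetminus\{e\}\cong\R^3$ does nothing to the labels, the full chain of homeomorphisms is $S_{n-1}$-equivariant. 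Applying $H^*(-;\C)$ then yields the desired isomorphism of graded $S_{n-1}$-representations.

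The content here is almost entirely in the initial identification; the main obstacle, if any, is not conceptual but bookkeeping, namely making sure that the section chosen for the $SU(2)$-action is genuinely compatible with the subgroup $S_{n-1}$ one restricts to, and that only the restricted action (not the full $S_n$-action) is being claimed to match the obvious one on $\Conf(n-1,\R^3)$. This is exactly why the statement is phrased as a restriction to $S_{n-1}$ rather than an equality of $S_n$-representations.
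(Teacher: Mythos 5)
Your proof is correct and is essentially the same as the paper's: both use the free left-translation action of $SU(2)$ to normalize one marked point to the identity, identify the quotient with $\Conf(n-1, SU(2)\smallsetminus\{e\})\cong\Conf(n-1,\R^3)$, and observe that this is equivariant for the copy of $S_{n-1}$ fixing the normalized label. The only cosmetic difference is that the paper fixes the $n$-th point rather than the first.
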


\begin{proof}
We have a diffeomorphism $\Consm\cong\Conf(n-1,\R^3)$ given by using the action of $G$ to take
the $n^\text{th}$ point to the identity, leaving the remaining $n$ points in $G\smallsetminus\{\id\}\cong \R^3$.  This diffeomorphism is equivariant with respect to the action of 
$S_{n-1}\subset S_n$.
\end{proof}

\begin{remark}\label{PoinBn}
The polynomial $H(D_n,q) = H(C_{n-1}, q)$ is equal to the $f$-polynomial of the broken circuit
complex associated with the Coxeter arrangement of type $A_{n-2}$ \cite[Theorem 4.3]{OT-Comm}, which is equal to 
$(1+q)(1+2q)\cdots(1+(n-2)q)$.
\end{remark}

Let $W_n := R_n/\langle z_iz_j\rangle$ be the ring obtained by truncating $R_n$ to degree two.  As a graded representation
of $S_n$, $W_n$ is isomorphic to the 1-dimensional trivial representation in degree zero plus the irreducible permutation representation of dimension $n-1$ in degree two.

\begin{proposition}\label{cdw}
There exists an isomorphism
$$C_{n} \cong D_n\otimes_\C W_n$$
of graded representations of $S_{n}$.
\end{proposition}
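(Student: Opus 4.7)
The plan is to realize $\Conf(n,\R^3)$ as the total space of an $S_n$-equivariant fiber bundle over $\Conf(n,G)/G$ whose fiber has cohomology $W_n$, and then argue that the associated Serre spectral sequence collapses for parity reasons. Concretely, fix a point $\infty \in G$ and identify $\R^3 \cong G \smallsetminus \{\infty\}$; this produces an $S_n$-equivariant open inclusion $\Conf(n,\R^3) \hookrightarrow \Conf(n,G)$, which composed with the $G$-quotient yields an $S_n$-equivariant map $f : \Conf(n,\R^3) \to \Conf(n,G)/G$. Because $G$ acts freely on $\Conf(n,G)$, the fiber of $f$ over the class of $(p_1,\dots,p_n)$ is canonically identified with $\{g \in G : g p_i \neq \infty \text{ for all } i\} = G \smallsetminus \{\infty p_1^{-1},\dots,\infty p_n^{-1}\}$, and distinctness of the $p_i$ guarantees that these $n$ removed points are distinct. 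Thus $f$ is a fiber bundle whose fiber $F$ is diffeomorphic to $S^3$ with $n$ distinguishable points removed, with $S_n$ acting by permuting the labels of the removed points.

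Next I would identify $H^*(F)$ as a graded $S_n$-representation. The degree-zero part is the trivial representation, and Alexander duality in $S^3$ gives an $S_n$-equivariant isomorphism $H^2(F) \cong \widetilde{H}_0(\{n\text{ labeled points}\})$, which is precisely the standard representation; all other cohomology vanishes. Therefore $H^*(F) \cong W_n$ as graded $S_n$-representations.

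To finish, consider the $S_n$-equivariant Serre spectral sequence for $f$. The base $\Conf(n,G)/G$ is diffeomorphic to $\Conf(n-1,\R^3)$ by Proposition \ref{cd}, and hence is simply connected, so the $E_2$ page has no twisted coefficients and is given by $D_n \otimes W_n$ as a bigraded $S_n$-representation. Both tensor factors live in even cohomological degree, so every class on the $E_2$ page has even total degree. The only higher differential that could a priori be nonzero is $d_3 : E_3^{p,2} \to E_3^{p+3,0}$, but this would reverse the parity of the total degree and hence must vanish. Therefore the spectral sequence degenerates at $E_2$, and semisimplicity of $\C[S_n]$ lets us split the resulting $S_n$-equivariant filtration on $C_n$ to obtain the desired graded $S_n$-equivariant isomorphism $C_n \cong D_n \otimes W_n$. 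The main subtlety will be carefully tracking $S_n$-equivariance throughout, and in particular confirming via Alexander duality that the monodromy $S_n$-action on $H^2(F)$ really is the standard representation.
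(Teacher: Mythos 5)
Your proof is correct and follows essentially the same route as the paper's: both exhibit a fiber bundle over $\Conf(n,G)/G$ whose total space has cohomology $C_n$ and whose fiber is $S^3$ minus $n$ labeled points, and both collapse the Serre spectral sequence by parity. The only cosmetic difference is that the paper takes the total space to be $\Conf(n+1,G)/G$ and invokes Proposition~\ref{cd} to identify its $S_n$-equivariant cohomology with $C_n$, whereas you take it to be $\Conf(n,\R^3)$ directly (the diffeomorphism of Proposition~\ref{cd} intertwines the two bundle maps, so these are the same fibration), and you additionally spell out the Alexander duality identification $H^*(F)\cong W_n$ that the paper leaves implicit.
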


\begin{proof}
Consider the projection $\Conf(n+1,G)/G \to \Conf(n,G)/G$ given by forgetting the $(n+1)^\text{st}$
point.  This is an $S_n$-equivariant fiber bundle with fiber diffeomorphic to the complement 
of $n$ points in $G$.  The base
is simply connected and both the base and the fiber have cohomology only in even degree, thus
the Leray-Serre spectral sequence degenerates and the cohomology of the total
space is isomorphic to the tensor product of the cohomology of the base and the cohomology of the fiber.
This yields the desired isomorphism.
\end{proof}

\begin{remark}
Proposition \ref{cd} tells us that, if we know how to compute $D_{n+1}$, we know how to compute $C_n$.
Conversely, since $W_n$ is not a zero divisor in the semiring of graded representations of $S_n$,
Proposition \ref{cdw} tells us that we can recover $D_n$ from $C_n$.  This is important because there exist
extremely explicit formulas for $C_n$ in the literature; see Equation \eqref{cn}.
\end{remark}

\begin{corollary}\label{dn-rep-st}
The sequence $\{D_n\}$ of graded representations
of symmetric groups is representation stable.
\end{corollary}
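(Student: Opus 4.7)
The plan is to deduce representation stability of $\{D_n\}$ from that of the more familiar sequence $\{C_n\}$, using the tensor-product identification $C_n \cong D_n \otimes W_n$ provided by Proposition \ref{cdw} as the bridge between the two.

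First, I would invoke representation stability of the configuration-space cohomology sequence $\{C_n\}$: this is a foundational example in the theory, obtainable either from Church's original theorem on ordered configuration spaces of a connected manifold, or from the FI-module criterion \cite[Theorem 3.3.3]{CEF} already cited in the proof of Proposition \ref{mn-rep-st}, applied to the finitely generated FI-module $n \mapsto H^*(\Conf(n,\R^3))$. Meanwhile, $W_n$ decomposes $S_n$-equivariantly as the trivial representation in degree $0$ plus the standard $(n-1)$-dimensional irreducible $V_n$ in degree $2$, so Proposition \ref{cdw} translates into the degree-by-degree identity
\[(C_n)_{2k} \;\cong\; (D_n)_{2k} \;\oplus\; V_n \otimes (D_n)_{2k-2}.\]

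From here, I would induct on $k$ to establish stability of $\{(D_n)_{2k}\}$ for every $k$. The base case $(D_n)_0 \cong \C$ is the trivial representation, hence stable. For the step, assume $\{(D_n)_{2j}\}$ is representation stable for all $j<k$; then the sequence $\{V_n \otimes (D_n)_{2k-2}\}$ is likewise representation stable (tensoring a rep-stable sequence with the standard representation preserves stability, which follows either from the stabilization of Pieri-type decompositions $V(\lambda)_n \otimes V(n-1,1)$ or from the fact that the tensor product of finitely generated FI-modules is finitely generated). Since $(D_n)_{2k}$ and $V_n \otimes (D_n)_{2k-2}$ together make up the rep-stable $(C_n)_{2k}$, and $(D_n)_{2k}$ is a genuine (not virtual) representation, its $V(\lambda)_n$-multiplicities are non-negative integer differences of stabilizing quantities, so they stabilize as well.

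The one point needing care is that the consistency maps $(D_n)_{2k} \to (D_{n+1})_{2k}$ --- coming from pullback along the forget-a-point projections $\Conf(n+1,G)/G \to \Conf(n,G)/G$ --- are compatible with the decomposition above, so that stability of multiplicities is witnessed by the usual injections of irreducibles; this follows from the naturality of the Leray-Serre argument in the proof of Proposition \ref{cdw}. The principal technical obstacle is the stability-under-tensoring-with-$V_n$ fact invoked in the induction, which is standard but worth stating carefully; the rest of the argument is a straightforward bookkeeping of multiplicities in each graded piece.
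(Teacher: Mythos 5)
Your proof is correct and takes the same route as the paper's: the paper simply cites Church's theorem for $\{C_n\}$, notes $\{W_n\}$ is obviously stable, and then asserts that stability of $\{D_n\}$ "follows from Proposition \ref{cdw}." Your degree-by-degree induction peeling $V_n\otimes(D_n)_{2k-2}$ off of $(C_n)_{2k}$ is precisely the bookkeeping the paper leaves implicit, so you have just spelled out the details of the same argument.
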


\begin{proof}
Representation stability of $\{C_n\}$ (or, more generally, for the cohomology of the configuration space of any manifold)
was proved by Church \cite[Theorem 1]{Church}.  For $\{W_n\}$, it is obvious.  Representation stability of $\{D_n\}$
then follows from Proposition \ref{cdw}.
\end{proof}

Given a graded representation $V$ of $S_n$, let $\overline{V}$ be the ungraded representation
obtained by forgetting the grading.  In this section, we describe $\overline{C}_n$ and $\overline{D}_n$.
Let $Z_n\subset S_n$ be the cyclic group.

\begin{proposition}
There exist isomorphisms
$$\overline{C}_n\cong \C[S_n]\and \overline{D}_n\cong \C[S_n/Z_n] \cong \Ind_{Z_n}^{S_n}(\operatorname{triv})$$
of representations of $S_n$.
\end{proposition}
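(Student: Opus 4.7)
The plan is to match characters on both sides via a Lefschetz-type fixed point computation, taking advantage of the fact that $C_n$ and $D_n$ are concentrated in even cohomological degrees. I proceed separately for the two isomorphisms, although the arguments are parallel.

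For $\overline{C}_n \cong \C[S_n]$, the key observation is that for any nontrivial $\sigma \in S_n$ the fixed locus $\Conf(n,\R^3)^\sigma$ is empty: the conditions $x_i = x_{\sigma(i)}$ would force two of the configuration points to coincide as soon as $\sigma \neq e$. Since $C_n$ is concentrated in even degrees, the honest trace of $\sigma$ on $\overline{C}_n$ agrees with the alternating-sum Lefschetz number of $\sigma$ acting on $\Conf(n,\R^3)$, up to an orientation sign that must be tracked. On this open manifold I would invoke the Lefschetz-Hopf formula in its form for compactly supported cohomology and then pass to ordinary cohomology via Poincar\'e duality, accounting for the sign $\operatorname{sign}(\sigma)$ coming from the action of $\sigma$ on $(\R^3)^n$. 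The upshot is $\operatorname{tr}(\sigma \mid \overline{C}_n) = 0$ for all $\sigma \neq e$; combined with $\dim \overline{C}_n = n!$ (Remark \ref{PoinBn}, applied to $C_n$ rather than $D_n$), this identifies $\overline{C}_n$ with the regular representation.

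For $\overline{D}_n \cong \C[S_n/Z_n]$, I apply the analogous Lefschetz analysis to $X := \Conf(n,SU(2))/SU(2)$. An $SU(2)$-orbit of configurations is $\sigma$-fixed precisely when there is some $h \in SU(2)$ with $g_{\sigma(i)} = h\cdot g_i$ for every $i$; propagating this relation around each cycle of $\sigma$ forces $h$ to have order equal to the length of every cycle, so $\sigma$ must have uniform cycle type $(m)^{n/m}$ for some divisor $m$ of $n$. These are exactly the $S_n$-conjugacy classes that meet $Z_n$, matching the support of the character of $\Ind_{Z_n}^{S_n}(\triv) = \C[S_n/Z_n]$. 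For such $\sigma$, after using the $SU(2)$-action to pin one starting point to the identity, the fixed set $X^\sigma$ is parametrized by a choice of $h$ (living in a union of $SU(2)$-conjugacy classes of order-$m$ elements, each diffeomorphic to $S^2$ when $m\ge 3$) together with $(n/m)-1$ additional starting points lying in distinct $\langle h\rangle$-orbits. A direct Euler characteristic calculation on this parametrization reproduces the value $\chi_{\C[S_n/Z_n]}(\sigma)$ given by the standard Frobenius formula for induced characters.

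The main obstacle I anticipate is making the Lefschetz argument rigorous on these noncompact manifolds: tracking the orientation sign from Poincar\'e duality, and computing the fixed-point Euler characteristics cleanly across all divisors $m \mid n$ (the combinatorics of $\langle h\rangle$-orbits in $SU(2)$ changes between $m=2$, where $h=-I$ is central, and $m \geq 3$, where $h$ generates a non-central cyclic subgroup). One attractive alternative is to quote the classical Lehrer-Solomon/Orlik-Solomon description of $\overline{C}_n$ as the regular representation, handling the first isomorphism by citation; however, Propositions \ref{cd} and \ref{cdw} alone are not enough to pin down $\overline{D}_n$ (one can check already for $n=4$ that they admit multiple solutions), so a direct character computation on elements of cycle type $(m)^{n/m}$ remains unavoidable.
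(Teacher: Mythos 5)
Your route is genuinely different from the paper's. The paper never examines fixed points of individual permutations: it introduces an auxiliary circle action ($U(1)$ rotating $\R^3$ about an axis for $C_n$; right translation on $SU(2)$ for $D_n$), observes that concentration in even degree forces equivariant formality, and thereby obtains a filtration on the cohomology of the $U(1)$-fixed set whose associated graded is the full cohomology, $S_n$-equivariantly. The $U(1)$-fixed sets are homotopy discrete --- $\Conf(n,\R)\simeq S_n$ and $\Conf(n,U(1))/U(1)\simeq S_n/Z_n$ --- so $\overline{C}_n$ and $\overline{D}_n$ are read off as permutation modules with no Lefschetz number, no Euler characteristic, and no case analysis over cycle types. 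For $\overline{C}_n$ your argument can be made to work and even simplified: since $\Conf(n,\R^3)^\sigma=\emptyset$ for $\sigma\neq e$, on a finite $S_n$-CW model the cellular chain groups are free $\Z[\langle\sigma\rangle]$-modules, and the Hopf trace formula gives $\operatorname{tr}(\sigma\mid\overline{C}_n)=0$ directly, with no Poincar\'e duality or orientation sign required.

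For $\overline{D}_n$, however, there is a real gap. Your support analysis (nonempty fixed locus exactly for uniform cycle types $(m)^{n/m}$ with $m\mid n$, which are the classes meeting $Z_n$) is correct, but the proof then stops at precisely the step that matters: establishing a valid fixed-point formula on the noncompact manifold $X=\Conf(n,SU(2))/SU(2)$ and computing $\chi(X^\sigma)$ to confirm it equals the Frobenius value $\tfrac1n\phi(m)\,m^{n/m}(n/m)!$. That computation is genuinely delicate: $X^\sigma$ is positive-dimensional and noncompact; the parametrization by $h$ and per-cycle starting points has to be taken modulo the residual $SU(2)$-action; the exact-order-$m$ locus in $SU(2)$ is a point for $m=2$ and a union of $\phi(m)/2$ two-spheres for $m\geq 3$; and the ``distinct $\langle h\rangle$-orbits'' condition carves out yet another configuration-space-like subspace. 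As you yourself observe, Propositions \ref{cd} and \ref{cdw} do not determine $\overline{D}_n$, so this calculation cannot be avoided; until it is carried out, the second isomorphism is asserted rather than proved --- and the paper's equivariant-formality argument sidesteps it entirely.
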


\begin{proof}
The first isomorphism is well-known, but we quickly review one proof here because we will use a very similar
argument for the second isomorphism.  Consider the action of $U(1)$ on $\R^3\cong \R\oplus\C$ given by rotation
on the second factor, which induces an action of $U(1)$ on $\Con$.  Since $H^*(\Con; \C)$ is concentrated in even
degree, this action is equivariantly formal, meaning that the $U(1)$-equivariant cohomology of $\Con$ is a free
module over the equivariant cohomology of a point.  It follows that there is a natural filtration on the cohomology
of the fixed point set $\Con^{U(1)}$ whose associated graded is 
isomorphic to the cohomology of $\Con$ \cite[Corollary 2.6]{Moseley}.  
Since the action of $U(1)$ commutes with the action of $S_n$,
this isomorphism is $S_n$-equivariant \cite[Proposition 2.8]{Moseley}.
We have $$\Con^{U(1)} \cong \Conf(n,\R)\simeq S_n,$$
so $$H^*\!\left(\Con^{U(1)}; \C\right)\cong \C[S_n].$$
Passing to the associated graded does not change the isomorphism type of an (ungraded) representation of a finite
group, thus $\overline{C}_n\cong \C[S_n]$.

For the second isomorphism, we note that $U(1)$ acts on $\Conf(n,G)/G$ by {\em right} translation, commuting
with the action of $S_n$, with fixed point set 
$$\left(\Conf(n,G)/G\right)^{U(1)}\cong \Conf(n, U(1))/U(1)\simeq S_n/Z_n.$$
The second isomorphism follows by the same argument.
\end{proof}

\begin{remark}
The filtration of $H^*(\Conf(n,\R))\cong \C[S_n]$ 
whose associated graded is isomorphic to $C_n$ can be described very explicitly.  
First, note that $\Conf(n,\R)$ is a disjoint union of contractible pieces, so its cohomology
ring is simply the ring of locally constant functions.
A {\bf Heaviside function} $h_{ij}$ is a function that takes the value 1 on one side of a given hyperplane $\{x_i=x_j\}$
and 0 on the other side.  We define the $p^\text{th}$
filtered piece $F_p\,\C[S_n]$ to be the vector space of functions that can be expressed as polynomials of degree at
most $p$ in the Heaviside functions.  This filtration, was first studied by Varchenko and Gelfand \cite{VG},
coincides with the one arising from equivariant cohomology \cite[Remark 4.9]{Moseley}.

Similarly, we may define a {\bf cyclic Heaviside function} $h_{ijk}$ on $\Conf(n, U(1))/U(1)$ by 
specifying a cyclic ordering of the $i^\text{th}$, $j^\text{th}$ and $k^\text{th}$ points.
This is equal to the pullback of $h_{ij}$ from
$\Conf(n-1,\R)$ along the isomorphism from $\Conf(n, U(1))/U(1)$ to $\Conf(n-1,\R)$ given by using the action
of $U(1)$ to move the $k^\text{th}$ point to the origin.
Since we know that the filtration of $H^*(\Conf(n-1,\R); \C)$ arising from equivariant cohomology coincides
with the one induced by Heaviside functions, we may conclude that, for any fixed index $k$, the filtration of 
$H^*(\Conf(n, U(1))/U(1); \C)$ arising from equivariant cohomology coincides with the filtration generated by the cyclic 
Heaviside functions $\{h_{ijk}\mid 0\leq i<j\leq n\}$.  Since the filtration arising from equivariant cohomology is preserved by the action of $S_n$,
it must also coincide with the filtration generated by {\em all} cyclic Heaviside functions, 
where all three indices are allowed to vary.
\end{remark}

\subsection{The main conjecture}\label{sec-main}
Our main conjecture is as follows.

\begin{conjecture}\label{main}
There exists an isomorphism of graded $S_n$ representations $M_n \cong D_n$.
\end{conjecture}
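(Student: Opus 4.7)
The plan is to reduce the conjecture to the intermediate claim
$$C_n \;\cong\; M_n \otimes_{\C} W_n \qquad (\ast)$$
of graded $S_n$-representations. Given $(\ast)$, Proposition \ref{cdw} yields $M_n \otimes W_n \cong D_n \otimes W_n$, and since $W_n$ is not a zero divisor in the semiring of graded $S_n$-representations (noted in the Remark following Proposition \ref{cdw}), cancellation produces $M_n \cong D_n$. Numerically this is consistent: by Remarks \ref{PoinMn} and \ref{PoinBn},
$$H(M_n,q)\,H(W_n,q) \;=\; (1+q)(1+2q)\cdots(1+(n-2)q)\cdot(1+(n-1)q) \;=\; H(C_n,q),$$
and $(\ast)$ can be verified directly by character computation in small ranks.

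The first, algebraic attempt at $(\ast)$ uses the isomorphism $OT_n \cong R_n \otimes M_n$ of $S_n$-equivariant graded $R_n$-modules to write
$$M_n \otimes_{\C} W_n \;\cong\; OT_n \otimes_{R_n} W_n \;=\; OT_n/J,$$
where $J \subset OT_n$ is the ideal generated by $\varphi_n(z_i z_j)$ for all $i,j$. On the other hand $C_n = OT_n/I$ with $I = \langle e_{ij}^2 \rangle$. A short calculation already at $n = 3$ shows that $I$ and $J$ are distinct (in fact neither contains the other), so the two quotient algebras of $OT_n$ cannot be identified on the nose; the task is to produce an \emph{abstract} $S_n$-equivariant graded isomorphism $OT_n/I \cong OT_n/J$.

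The most promising route is geometric, extending the Braden--Proudfoot framework mentioned in the Introduction. There $OT_n \cong \IH^*_T(X)$ for a hypertoric variety $X$ and $C_n \cong \H^*_T(U)$ for a smooth open subset $U \subset X$ on which the torus $T$ acts freely with quotient $\Conf(n,\R^3)$. If one can identify $M_n = OT_n \otimes_{R_n} \C$ with the non-equivariant intersection cohomology $\IH^*(X)$—a form of equivariant formality for singular $\IH^*$—then a Leray--Hirsch-type argument for the principal $T$-bundle $U \to U/T$ would furnish $C_n \cong \IH^*(X) \otimes W_n = M_n \otimes W_n$, yielding $(\ast)$. A parallel combinatorial route is to construct an $S_n$-equivariant filtration on $C_n$, perhaps an extension of the Varchenko--Gelfand filtration discussed earlier, whose associated graded is $M_n \otimes W_n$.

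The principal obstacle is $(\ast)$ itself: establishing this abstract isomorphism despite $I \neq J$ requires either the delicate geometric identification above (with its nontrivial equivariant-formality input for singular intersection cohomology) or a representation-theoretic filtration argument. Either route sidesteps the plethystic computations that the authors identify as the sticking point for their recursion for $M_n$; a direct Frobenius-character comparison of the two sides of $(\ast)$ appears to require exactly the Kronecker-product manipulations one is trying to avoid.
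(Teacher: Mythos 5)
This statement is labeled a \emph{conjecture} in the paper; the authors do not prove it. Their evidence consists of the dimension check (Remarks \ref{PoinMn} and \ref{PoinBn}), the low-degree check in Remark \ref{ztf}, and computer verification through $n=10$ via the recursion of Theorem \ref{recursion}. So the relevant question is whether your proposal actually establishes the conjecture on its own merits, and it does not: it is an outline that terminates exactly at the unproved step.

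Your reduction to $(\ast)$, namely $C_n \cong M_n\otimes_\C W_n$, is correct but is precisely the content of the paper's Remark \ref{ztf}: using $OT_n\cong R_n\otimes_\C M_n$ one has $M_n\otimes_\C W_n\cong OT_n/\langle z_iz_j\rangle$, while $C_n\cong OT_n/\langle e_{ij}^2\rangle$, and since $W_n$ is a non-zero-divisor in the semiring of graded $S_n$-representations, $(\ast)$ is equivalent to the conjecture. The observation that the ideals $I=\langle e_{ij}^2\rangle$ and $J=\langle z_iz_j\rangle$ are distinct is also already in the paper, which goes further: the remark following Remark \ref{ztf} introduces the family $A_n(t)=OT_n/\langle(1-t)e_{ij}^2 - t\,z_iz_j\rangle$ and shows (e.g.\ for $n=4$) that this family fails to be flat at $t=0$ and $t=1$, so the natural ``deform one ideal to the other'' route to an abstract isomorphism is blocked, not merely inconvenient. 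Your proposal does not address this.

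The geometric route you sketch is also not coherent as stated. The identification $M_n\cong\IH^*(X_n;\C)$ is not a missing equivariant-formality input---it is Theorem \ref{bp}. The real problem lies one step later. The open stratum $U\subset X_n$ has $T_n$ acting freely, so $H^*_{T_n}(U;\C)\cong H^*(U/T_n;\C)\cong C_n$ directly; there is no residual $T_n$-bundle whose Leray--Hirsch decomposition could produce $\IH^*(X_n)\otimes W_n$. What relates $OT_n$ and $C_n$ is the \emph{restriction} map $\IH^*_{T_n}(X_n)\to H^*_{T_n}(U)$, which is a ring surjection with kernel $\langle e_{ij}^2\rangle$; nothing in the standard spectral-sequence toolkit turns that quotient into a tensor factorization against $W_n$ with $M_n=\IH^*(X_n)$ as the complementary factor. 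In short, the proposal correctly reformulates the conjecture and correctly identifies $(\ast)$ as the crux, but supplies no argument for $(\ast)$; the two sketched approaches either rest on a step that is already known (and therefore not the obstruction) or on a fibration structure that does not exist.
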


\begin{remark}
Using the computational technique described in Section \ref{geometry} (specifically Proposition \ref{symmetric}), 
we have checked Conjecture \ref{main} on a computer up to $n=10$.
\end{remark}

\begin{remark}
Remarks \ref{PoinMn} and \ref{PoinBn}
tell us that Conjecture \ref{main} holds at the level of graded vector spaces.
\end{remark}

\begin{remark}\label{ztf}
Since $W_n$ is not a zero divisor in the semiring of graded representations of $S_n$,
Conjecture \ref{main} is equivalent to the statement that $M_n\otimes_\C W_n\cong D_n\otimes_\C W_n$.
Since $OT_n\cong R_n\otimes_\C M_n$, we have $$M_n\otimes_\C W_n \cong OT_n/\langle z_iz_j\rangle.$$
On the other hand, Proposition \ref{cdw} says that $$D_n\otimes_\C W_n\cong C_n \cong OT_n/\langle e_{ij}^2\rangle.$$
We know that $\C\{z_iz_j\}$ and $\C\{e_{ij}^2\}$ are both isomorphic to the symmetric square of the irreducible permutation
representation, thus Conjecture \ref{main} holds in degrees zero, two, and four for all values of $n$.
\end{remark}

\begin{remark}
Since $z_iz_j = -e_{ij}^2 + f_{ij}$, where $f_{ij}$ is a certain sum of square-free monomials, 
it is natural to consider the family of rings
$$A_n(t) := OT_n/\langle (1-t)e_{ij}^2 -tz_iz_j\rangle
= OT_n/\langle e_{ij}^2 -tf_{ij}\rangle
,$$
where $t\in\C$.
By Remark \ref{ztf}, $A_n(0)\cong D_n\otimes_\C W_n$ and $A_n(1)\cong M_n\otimes_\C W_n$.
There exists a nonempty Zariski open subset $U\subset \C$ such that the restriction of this family to $U$
is flat, which means that the graded $S_n$ representations $A_n(t)$ are isomorphic for all $t\in U$.
If $0,1\in U$, this would imply Conjecture \ref{main}.  Unfortunately, this is not the case.
For example, when $n=4$, computations in Macaulay 2 reveal that $U = \C\smallsetminus\{0,1,-\frac 1 2\}$.

Put differently, this means that {\em most} ideals in $OT_4$ that are generated by a copy of the symmetric square
of the permutation representation in degree four are strictly larger than both $\langle e_{ij}^2\rangle$
and $\langle z_iz_j\rangle$.  These two ideals are exceptional, and our conjecture (which is true when $n=4$) says that they are exceptional in the same way.
\end{remark}

\excise{
\begin{remark}
It is tempting to generalize Conjecture \ref{main} to other hyperplane arrangements.  Let $\cA$
be a collection of hyperplanes in a vector space $V$.  Let $OT_\cA$ be the Orlik-Terao algebra, which is defined
to be the subalgebra of rational functions on $V$ generated by the reciprocals of the linear forms generating
the hyperplanes in $\cA$.  The algebra $OT_\cA$ is free as a graded algebra over a certain polynomial ring $R_\cA$
\cite[Proposition 7]{PS}, and we may define $M_\cA := OT_\cA \otimes_{R_\cA}\C$.  Let $C_\cA$
be the Artinian Orlik-Terao algebra, which is defined to be the quotient of $OT_\cA$ by the squares of the generators.
One natural generalization of Conjecture \ref{main} would be that $M_\cA$ is a tensor
factor of $C_\cA$, equivariantly for the group of symmetries of $\cA$.  

Unfortunately, this more general conjecture fails
even at the level of graded vector spaces.  The polynomial $H(M_\cA, q)$ is equal to the $h$-polynomial of the broken
circuit complex associated with $\cA$ \cite[Proposition 7]{PS}, 
while $H(C_\cA, q)$ is equal to the $f$-polynomial of the same broken
circuit complex \cite[Theorem 4.3]{OT-Comm}.  
In general, the $h$-polynomial does not divide the $f$-polynomial.  For example, when $\cA$
is the Coxeter arrangement of type $B_2$, the $h$-polynomial is equal to $1+2q$ and the $f$-polynomial is equal
to $(1+q)(1+3q)$.
\end{remark}
}

\subsection{Generalizing to graphs}\label{graphs}
In this section we generalize some of our results and conjectures to graphs; the cases described above correspond
to the complete graph.

Let $\Gamma$ be a simple connected graph with vertex set $[n]$, and let $\ag\subset S_n$ be the group of automorphisms
of $\Gamma$.  Let $OT_\Gamma$ be the Orlik-Terao algebra of the hyperplane arrangement associated with $\Gamma$;
this is the subalgebra of rational functions on $\C^n$ generated by $\frac{1}{x_i-x_j}$ whenever $i$ and $j$ are connected by
an edge.  It is a graded representation of the automorphism group $\Aut(\Gamma)\subset S_n$, with the generators in degree two.
We again have a map from $R_n$ to $OT_\Gamma$ as before, and we let $$M_\Gamma := OT_\Gamma\otimes_{R_n}\C.$$
Then there exists
a graded $\ag$-equivariant isomorphism 
$$OT_\Gamma \cong R_n\otimes_\C M_\Gamma,$$
and
$H(M_\Gamma, q) = h_\Gamma(q)$, 
the $h$-polynomial of the corresponding broken circuit complex \cite[Proposition 7]{PS}.

For any space $X$, consider the space $\Conf(\Gamma,X)$ of maps from the vertices of $\Gamma$ to $X$ such that
adjacent vertices map to different points.  Let
$$C_\Gamma := H^*(\Conf(\Gamma,\R^3) \and D_\Gamma := H^*(\Conf(\Gamma, G)/G; \C),$$
both graded representations of $\Aut(\Gamma)$.
Let $\hG$ be the cone over $\Gamma$; this is the graph with vertex set $[n+1]$ such that the $(n+1)^\text{st}$
vertex is connected to all other vertices and the subgraph spanned by the remaining vertices is equal to $\Gamma$.
The following proposition is a straightforward generalization of Proposition \ref{cd}.

\begin{proposition}\label{cdg}
There exists an isomorphism $$C_\Gamma\cong \Res^{\Aut(\hG)}_{\ag}\left(D_{\hG}\right)$$
of graded representations of $\ag$.  
\end{proposition}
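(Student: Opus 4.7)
The plan is to mimic the proof of Proposition \ref{cd} essentially verbatim, using the fact that in $\hG$ the cone vertex is adjacent to every other vertex, so requiring the configuration condition at this vertex is what produces a punctured copy of $G$.

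Concretely, I would proceed as follows. First, observe that $\Aut(\Gamma)$ sits inside $\Aut(\hG)$ as the stabilizer of the cone vertex $n+1$: every automorphism of $\Gamma$ extends uniquely to an automorphism of $\hG$ fixing vertex $n+1$, so the restriction map in the statement is well-defined. Next, use the free diagonal left $G$-action on $\Conf(\hG,G)$ to choose, in each $G$-orbit, the unique representative sending vertex $n+1$ to $\id\in G$. This identifies
\[
\Conf(\hG,G)/G \;\xrightarrow{\cong}\; \bigl\{(g_1,\ldots,g_n)\in G^n \;\big|\; g_i\neq \id \text{ for all } i,\ \text{and } g_i\neq g_j \text{ whenever } i\sim j \text{ in } \Gamma\bigr\},
\]
where the condition $g_i\neq \id$ comes from the edge joining vertex $i$ to the cone vertex in $\hG$, and the remaining edge conditions are exactly those of $\Gamma$ on the subgraph spanned by $[n]$.

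Then, using the diffeomorphism $G\smallsetminus\{\id\}\cong S^3\smallsetminus\{\text{pt}\}\cong\R^3$, the right-hand side is precisely $\Conf(\Gamma,\R^3)$. Composing the two diffeomorphisms yields a diffeomorphism
\[
\Conf(\hG,G)/G \;\cong\; \Conf(\Gamma,\R^3),
\]
and it is manifestly equivariant for the subgroup $\Aut(\Gamma)\subset\Aut(\hG)$, since this subgroup acts only by permuting the first $n$ coordinates and commutes both with the slicing at the cone vertex and with the identification $G\smallsetminus\{\id\}\cong\R^3$. Taking cohomology and noting that the $\Aut(\hG)$-action on $D_{\hG}$ restricts to the $\Aut(\Gamma)$-action arising from permuting vertices of $\Gamma$ yields the desired $\Aut(\Gamma)$-equivariant isomorphism $C_\Gamma\cong \Res^{\Aut(\hG)}_{\Aut(\Gamma)}(D_{\hG})$.

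There is essentially no obstacle here; every step is a direct generalization of the argument for the complete graph. The only point requiring a moment of care is checking that the inclusion $\Aut(\Gamma)\hookrightarrow\Aut(\hG)$ as the stabilizer of the cone vertex is genuine (it is, whether or not $\Gamma$ already contains a vertex adjacent to all others, in which case $\Aut(\hG)$ is strictly larger than $\Aut(\Gamma)$ but the restriction is still well defined), and that the identification $G\smallsetminus\{\id\}\cong\R^3$ can be chosen continuously and $\Aut(\Gamma)$-equivariantly — which is trivial because $\Aut(\Gamma)$ does not act on the target $G$ at all, only by permuting the factor indices.
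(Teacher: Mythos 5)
Your proposal is correct and is exactly the argument the paper has in mind: the paper states Proposition \ref{cdg} without proof, remarking only that it is ``a straightforward generalization of Proposition \ref{cd},'' and your argument is precisely that generalization, sliding the cone vertex to the identity via the free $G$-action and recognizing the remaining constraints as the $\Conf(\Gamma,\R^3)$ conditions. The small clarifying remarks (that $\Aut(\Gamma)$ is the stabilizer of the cone vertex in $\Aut(\hG)$, and that $\Aut(\hG)$ may be strictly larger) are accurate and worth spelling out, but they do not alter the approach.
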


The following conjecture is a natural generalization
of Conjecture \ref{main}.

\begin{conjecture}\label{maing}
For any simple connected graph $\Gamma$, there exists an isomorphism $$M_{\Gamma}\cong D_{\Gamma}$$
of graded representations of $\Aut(\Gamma)$.
In particular, there exists an isomorphism
$$\Res^{\Aut(\hG)}_{\ag}\left(M_{\hG}\right)\cong C_\Gamma.$$
\end{conjecture}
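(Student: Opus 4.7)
The plan is to attack Conjecture \ref{maing} by induction on the number of edges of $\Gamma$, reducing to the complete-graph case in which Conjecture \ref{main} must be proved separately. The strategy is to establish compatible deletion-contraction recursions on both sides of the conjectured isomorphism and to match them inductively while carrying along the action of $\Aut(\Gamma)$.

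First, I would develop a deletion-contraction recursion for $M_\Gamma$. For any edge $e \in E(\Gamma)$, the matroid of $\Gamma$ satisfies a standard deletion-contraction decomposition, which I expect to lift to an $\Aut(\Gamma,e)$-equivariant short exact sequence of graded $R_n$-modules relating $OT_\Gamma$, $OT_{\Gamma \setminus e}$, and a shifted version of $OT_{\Gamma/e}$, where $\Aut(\Gamma,e) \subset \Aut(\Gamma)$ denotes the stabilizer of $e$. Quotienting by the action of $R_n$ would give a corresponding recursion for $M_\Gamma$ in terms of $M_{\Gamma \setminus e}$ and $M_{\Gamma/e}$. The upgrade of the classical Poincar\'e-polynomial recursion (a specialization of the Tutte polynomial) to a symmetric-function identity is already implicit in Proposition \ref{symmetric}, so this step should be workable.

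Second, I would establish the matching recursion for $D_\Gamma$. Writing $e = ij$, the inclusion $\Conf(\Gamma, G)/G \hookrightarrow \Conf(\Gamma \setminus e, G)/G$ realizes the source as the complement of the smooth subvariety cut out by $g_i = g_j$. Since both total space and subvariety have cohomology concentrated in even degrees (by Remark \ref{PoinBn} together with induction), the associated Gysin long exact sequence splits into short exact sequences, producing a recursion for $D_\Gamma$ in terms of $D_{\Gamma \setminus e}$ and $D_{\Gamma/e}$, with a degree shift coming from the codimension of the subvariety in $G$. Verifying that this matches the recursion from the previous step, degree by degree and equivariantly under $\Aut(\Gamma,e)$, is the main bookkeeping task.

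Third, one runs the induction, with base case a tree, where $M_\Gamma = D_\Gamma = \C$ trivially. The inductive step requires choosing an edge $e$ and handling the fact that $\Aut(\Gamma)$ may not stabilize any single edge; working with the full $\Aut(\Gamma)$-orbit of $e$ and summing over it should suffice. The main obstacle is therefore twofold: first, making the deletion-contraction on the $D_\Gamma$ side rigorous as an $\Aut(\Gamma,e)$-equivariant statement, including the degeneration of the Leray-Gysin spectral sequence compatibly with the group action; and second, and more seriously, establishing the complete-graph base case, namely Conjecture \ref{main}, whose recursion via Theorem \ref{recursion} involves plethysms of symmetric functions that, as the authors note, resist direct manipulation. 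A genuinely new geometric input---most plausibly a realization of $D_n$ as the intersection cohomology, or as the ordinary cohomology of a smooth open stratum, of an object closely related to the hypertoric variety of Theorem \ref{bp}---would most likely be required to close the argument.
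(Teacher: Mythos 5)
The statement you are trying to prove is Conjecture \ref{maing}, which the paper leaves open; there is no proof of it in the paper to compare against. Your proposal is therefore best judged on its own terms, and by your own admission it does not close the argument: your induction bottoms out at the complete-graph case, i.e.\ Conjecture \ref{main}, which you flag as requiring ``a genuinely new geometric input.'' As written the proposal reduces one open conjecture to another rather than proving anything.

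Beyond the missing base case, there is a gap in the inductive step that you gloss over. A deletion-contraction short exact sequence for a chosen edge $e$ (on the $M$ side) or a Gysin sequence for the codimension-three locus $\{g_i = g_j\}$ (on the $D$ side) is at best equivariant for the edge stabilizer $\Aut(\Gamma, e)\subset\Aut(\Gamma)$. Even granting both recursions and using semisimplicity to split them, the inductive hypothesis only yields an isomorphism $\Res^{\Aut(\Gamma)}_{\Aut(\Gamma,e)} M_\Gamma \cong \Res^{\Aut(\Gamma)}_{\Aut(\Gamma,e)} D_\Gamma$ of $\Aut(\Gamma,e)$-representations. Restriction to a proper subgroup does not in general determine a representation up to isomorphism, and ``summing over the $\Aut(\Gamma)$-orbit of $e$'' does not obviously repair this: each term in the sum still only sees the restricted representation, and inducing back up from stabilizers overcounts. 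You would need a separate argument that the collection of restricted identities, together with the known equality of Hilbert series, forces the full $\Aut(\Gamma)$-equivariant isomorphism; this is not automatic and is not addressed. Finally, note that the existence of a deletion-contraction short exact sequence for the Orlik--Terao algebra $OT_\Gamma$ itself (as opposed to the Orlik--Solomon algebra, or the mere numerical Tutte recursion for Hilbert series) is an additional unproved assumption in your first step.
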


\begin{remark}
We have $H(M_{\hG}, q) = h_{\hG}(q) = f_\Gamma(q) = H(C_{\Gamma}, q)$, thus the second part of 
Conjecture \ref{maing} holds at the level of graded vector spaces.
\end{remark}

\excise{
\subsection{Forgetting the grading}\label{sec-filt}
Given a graded representation $V$ of $S_n$, let $\overline{V}$ be the ungraded representation
obtained by forgetting the grading.  The representation
$\overline{C}_n$ is isomorphic to the regular representation $\C[S_n]$,
thus Proposition \ref{cd} implies that 
\begin{equation}\label{second}\Res_{S_{n-1}}^{S_n}(\overline{D}_n)\cong \C[S_{n-1}].\end{equation}
The representation $\overline{W}_n$ is isomorphic to the vector representation
$\C^n$, thus Proposition
\ref{cdw} implies that \begin{equation}\label{first}\C[S_n]\cong\overline{D}_n\otimes_\C\C^n.\end{equation}
Unlike in the graded case, $\C^n$ {\em is} a zero divisor in the ring of virtual representations of $S_n$, so this
isomorphism does {\em not} determine $\overline{D}_n$.
Interestingly, Equations \eqref{first} and \eqref{second} turn out to be equivalent.  

\begin{proposition}\label{equiv}
Let $V$ be a representation of $S_n$ of dimension $(n-1)!$.  The following are equivalent.
\begin{enumerate}
\item $\Res_{S_{n-1}}^{S_n}(V)\cong \C[S_{n-1}]$
\item $\C[S_n]\cong V\otimes_\C \C^n$
\item $\chi_V(g) = 0$ for every $\operatorname{id}\neq g\in S_n$ with a fixed point.
\end{enumerate}
\end{proposition}

\begin{proof}
We have $\chi_{\C[S_n]}(g) = 0$ for all $g\neq \operatorname{id}$ and $\chi_{\C^n}(g)$
is equal to the number of fixed points of $g$.  Since the character of a tensor product is the product
of the characters, item 1 is equivalent to item 2.  Item 3 is equivalent to the statement that $\chi_V(g) = 0$
for every element $\operatorname{id}\neq g\in S_n$ that is conjugate to an element of $S_{n-1}$, which
is the same as saying that it has a fixed point.
\end{proof}

One way to obtain a representation $V$ satisfying the conditions of Proposition \ref{equiv} is by inducing
any 1-dimensional representation from the cyclic group $Z_n$ to $S_n$.  We know that $D_n$ contains the trivial representation
(in degree zero), so the only possible candidate for $\overline{D}_n$ of this form is the induction of the trivial representation.
We have verified the following conjecture by computer up to $n=10$.

\begin{conjecture}\label{ungraded}
There exists an isomorphism of $S_n$ representations
$$\overline{M}_n\cong \overline{D}_n \cong \Ind_{Z_n}^{S_n}(\operatorname{triv}) \cong \C[S_n/Z_n],$$
where $Z_n$ acts on $S_n$ via right multiplication.
\end{conjecture}

There is a natural filtration on the regular representation $\C[S_n]$, called the {\bf Heaviside filtration}
or {\bf Varchenko-Gelfand filtration}, defined as follows.  First, we identify $\C[S_n]$ 
{\em as a vector space} with the (commutative) ring of locally constant $\C$-valued functions on the space $\Conf(n,\R)$,
which is the complement of $\binom{n}{2}$ hyperplanes in $\R^n$.  A {\bf Heaviside function} is a function
that takes the value 1 on one side of a given hyperplane and 0 on the other side.  We define the $k^\text{th}$
filtered piece $F_k\,\C[S_n]$ to be the vector space of functions that can be expressed as polynomials of degree at
most $k$ in the Heaviside functions.  This is an interesting filtration on a boring ring, and the associated graded
ring is $S_n$-equivariantly isomorphic to $C_n$ \cite[Theorem 1.4]{Moseley}.

Define the Heaviside filtration on $\C[S_n/Z_n]$ by pushing forward the Heaviside filtration on $\C[S_n]$.
Put differently, we have a map $\operatorname{avg}:\C[S_n]\to\C[S_n]^{Z_n}\cong \C[S_n/Z_n]$ given by averaging
over the right action of $Z_n$, which commutes with the left action of $S_n$,
and we define
$$F_k\,\C[S_n/Z_n] := \operatorname{avg}\left(F_k\,\C[S_n]\right).$$
The following conjecture is a strengthening of Conjecture \ref{ungraded}; we have checked it at the level
of graded vector spaces up to $n=5$.

\begin{conjecture}\label{filtered}
The graded representations $M_n$ and $D_n$ of $S_n$ are both isomorphic to the associated graded of the Heaviside
filtration on $\C[S_n/Z_n]$.
\end{conjecture}
}

\section{Computing \boldmath{$M_n$} via hypertoric geometry}\label{geometry}
In this section, we explain how to use the geometry of hypertoric varieties to compute $M_n$.

\subsection{Hypertoric varieties}\label{sec:htv}
Given any hyperplane arrangement $\cA$ defined over the rational numbers, one may define
a variety called a {\bf hypertoric variety}.  Rather than giving a general construction, we will instead
give a direct definition of the hypertoric variety $X_n$ associated with the (doubled) Coxeter arrangement of type $A_{n-1}$.
For a general definition, see \cite{Pr07}.

Let $K_n$ be the lattice of rank $n(n-1)$ with basis $\{y_{ij}\mid i\neq j\in[n]\}$.
Consider the map
$\pi:K_n\to \Z\{x_1,\ldots,x_n\}$ taking $y_{ij}$ to $x_i-x_j$, and let $L_n$ be the image of $\pi$.  
Consider  the polynomial ring in $2n(n-1)$ variables $$Q_n := \C[z_{ij},w_{ij}]_{i\neq j}.$$
This ring has a grading by $K_n^*$ defined by putting $\deg(z_{ij}) = y_{ij}^* = -\deg(w_{ij})$.
Let $Q_n^L$ denote the subring of $Q_n$ spanned by homogeneous elements whose degree lies in the sublattice $L_n^*\subset K^*$.
Consider the map $$\mu_n:\Sym K_n^\C\to Q_n^L$$
taking $y_{ij}$ to $z_{ij}w_{ij}$, and define $$P_n := Q^L_n/\langle \mu_n(y)\mid \pi(y)=0\rangle\and X_n:= \Spec P_n.$$
The variety $X_n$ is the hypertoric variety that will be the main object of our attention.
Let $$T_n := \Hom(L_n^*, \cs)$$ be the algebraic torus of dimension $n-1$
with character lattice $L_n^*$; the grading of $P_n$ by $L_n^*$
induces an action of $T_n$ on $X_n$.  We also have an action of the symmetric group $S_n$ on $X_n$ given
by permuting indices.  This action does not commute with the action of $T_n$, but rather defines
an action of the semidirect product $T_n\rtimes S_n$ on $X_n$, where $S_n$ acts on $T_n$ in the obvious way.
The variety $X_n$ and its various symmetries are important to us due to the following theorem
\cite[Corollary 4.5]{TP08} (see also \cite[Proposition 3.16]{McBP}).

\begin{theorem}\label{bp}
There exists a canonical isomorphism $$\IH^*_{T_n}(X_n; \C)\cong OT_n$$
between the $T_n$-equivariant intersection cohomology of $X_n$ and $OT_n$.
This isomorphism is compatible with the maps from $$H^*_{T_n}(*; \C)\cong \Sym(L_n^*)_\C\cong R_n.$$
In particular, this implies that 
$$\IH^*(X_n; \C)\cong M_n.$$  Furthermore, all of these isomorphisms are compatible with the natural
actions of the symmetric group $S_n$.
\end{theorem}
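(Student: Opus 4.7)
The plan is to invoke the general theorem for hypertoric varieties and then specialize to the braid arrangement. The references \cite{TP08,McBP} establish that for any rational hyperplane arrangement $\cA$, the $T$-equivariant intersection cohomology of the associated hypertoric variety $X_\cA$ is canonically isomorphic to the Orlik-Terao algebra $OT_\cA$, with the isomorphism intertwining the natural $R_\cA$-algebra structures. So my first step would be to identify the arrangement implicit in the construction of $X_n$: the lattice $L_n \subset \Z\{x_1,\ldots,x_n\}$ spanned by the differences $x_i - x_j$ is precisely the root lattice of type $A_{n-1}$, and the generators $y_{ij}$ with their defining relations $\pi(y)=0$ encode exactly the circuits of the braid arrangement. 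Thus $X_n$ is the hypertoric variety attached to the Coxeter arrangement of type $A_{n-1}$, and the general theorem gives the isomorphism $\IH^*_{T_n}(X_n;\C)\cong OT_n$, compatible with the maps from $\Sym(L_n^*)_\C \cong R_n$.

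Next, I would verify $S_n$-equivariance. The entire construction of $X_n$ is natural in permutations of the index set: the symmetric group $S_n$ acts on $K_n$ by permuting the basis $\{y_{ij}\}$, on $L_n$ by permuting the simple roots, and hence on $Q_n^L$, $P_n$, and $X_n$. Since the Braden-Proudfoot construction of the isomorphism is functorial with respect to symmetries of the arrangement (both sides admit canonical presentations via generators attached to the hyperplanes, and the $S_n$ action permutes these generators in the same way on both sides), the isomorphism $\IH^*_{T_n}(X_n;\C)\cong OT_n$ automatically intertwines the $S_n$ actions.

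Finally, I would derive the non-equivariant statement. Hypertoric varieties have pure, equivariantly formal intersection cohomology --- this is a standard consequence of the existence of a $\cs$-action contracting $X_n$ to its core together with the fact that $\IH^*$ is concentrated in even degree \cite{TP08}. Equivariant formality yields an $S_n$-equivariant isomorphism
\[
\IH^*_{T_n}(X_n;\C)\cong H^*_{T_n}(\mathrm{pt};\C) \otimes_\C \IH^*(X_n;\C)
\]
as graded $R_n$-modules, so quotienting by the augmentation ideal $(R_n)_+$ gives
\[
\IH^*(X_n;\C) \cong OT_n\otimes_{R_n}\C = M_n,
\]
again $S_n$-equivariantly.

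The main obstacle is not in this specialization but in the underlying theorem of \cite{TP08}: identifying the equivariant intersection cohomology of a hypertoric variety with the Orlik-Terao algebra requires genuine input, either via a direct presentation analysis using the decomposition theorem along the affinization map, or via a combinatorial model for $\IH^*_{T_n}$. Granted that identification, the work here is essentially bookkeeping --- matching the combinatorial data encoded by $L_n$ with the braid arrangement and tracking the naturality of all constructions under $S_n$.
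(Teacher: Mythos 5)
The paper itself does not prove Theorem~\ref{bp} but states it as a citation to \cite[Corollary 4.5]{TP08} and \cite[Proposition 3.16]{McBP}, which is exactly what you do; your additional bookkeeping (identifying $X_n$ as the hypertoric variety of the braid arrangement, checking naturality under $S_n$, and deducing $\IH^*(X_n;\C)\cong M_n$ via equivariant formality and freeness of $OT_n$ over $R_n$) is correct and consistent with how the paper uses the result. In short, your proposal matches the paper's approach.
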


We next define a stratification of $X_n$, following the general construction in \cite[Section 2]{PW07}.  For each
partition $B_1\sqcup\cdots\sqcup B_\ell$ of the set $[n]$, consider the ideal
$$J_n^B := \langle z_{ij}, w_{ij} \mid \text{there exists an $r$ such that $i,j\in B_r$}\rangle\subset Q_n.$$
This ideal descends to an ideal in $P_n$, which cuts out a subvariety $X_n^B\subset X_n$.
We have $X_n^{B'}\subset X_n^B$ if and only if $B$ refines $B'$, and we define 
$$\becircled X_n^B := X_n^B\smallsetminus\bigcup_{\text{$B$ refines $B'$}} X_n^{B'}.$$
Then $$X_n = \bigsqcup_{B}\becircled X_n^B$$
is a $T_n$-equivariant stratification of $X_n$.  
For each partition $B$, consider the subtorus $$T_n^B := T_{|B_1|}\times\cdots\times T_{|B_\ell|}\subset T_n,$$
embedded in the natural way.  Then $T_n^B$ is the stabilizer of
every point in $\becircled X_n^B$ \cite[Remark 2.3]{PW07}, thus the torus $T_n/T_n^B$ acts freely on $\becircled X_n^B$.
The quotient space is not Hausdorff, but if we take the quotient of $\becircled X_n^B$ by the maximal compact subtorus
of $T_n/T_n^B$, we obtain a manifold homeomorphic to $\Conf(\ell,\mathbb{R}^3)$ \cite[Proposition 5.2]{PW07}.
Finally, the stratum $\becircled X_n^B$ has a normal slice 
that is $T_n^B$-equivariantly isomorphic to $X_{|B_1|}\times\cdots\times X_{|B_\ell|}$
\cite[Lemma 2.4]{PW07}.

\subsection{A geometric recursion}
Given any partition $B$ of $[n]$, let $S_B$ be the stabilizer of $B$.  
Letting $m_i$ be the number of parts of $B$ of size $i$, we may express $S_B$ as a product of wreath products:
$$S_B \;\;\cong\;\; \prod_{i=1}^n\; S_i\wr S_{m_i}.$$
Given any partition $\la$ of $n$, let $B(\la)$ be the partition of $[n]$ given by putting $B_1 = \{1,\ldots,\la_1\}$, $B_2 = \{\la_1+1,\ldots,\la_1+\la_2\}$,
and so on.  Let $S_\la := S_{B(\la)}\subset S_n$ be the stabilizer of the partition $B(\la)$,
and let $W_\la := \prod S_{m_i}\subset S_\la$.

We define a graded representation
$M_n^c$ of $S_n$ by putting $(M_n^c)_i := (M_n)_{4(n-1)-i}$.  Theorem \ref{bp} says that $M_n\cong\IH^*(X_n; \C)$,
and $4(n-1) = 2\dim_\C X_n = \dim_\R X_n$, thus we have $M_n^c\cong \IH^*_c(X_n; \C)$
by Poincar\'e duality.
We will use the geometry of the hypertoric variety $X_n$ to prove the following result.

\begin{theorem}\label{recursion}
For any positive integer $n$, there exists an isomorphism of graded $S_n$ representations
$$OT_n \;\;\cong\;\; \bigoplus_{\la\vdash n} \;\operatorname{Ind}_{S_\la}^{S_n}\left(
C_{\ell(\la)}
\otimes(M^c_{\la_1}\otimes R_{\la_1})\otimes\cdots\otimes (M^c_{\la_{\ell(\la)}}\otimes
R_{\la_{\ell(\la)}})\right).$$
Here the subgroup $W_\la\subset S_\la$ acts on $C_{\ell(\la)}$ via the embedding $W_\la\hookrightarrow S_{\sum m_i} = S_{\ell(\la)}$,
and it also permutes the remaining tensor factors of the same size.  In addition, each factor of the form $M^c_{\la_j}\otimes R_{\la_j}$
is acted on by a separate subgroup $S_{\la_j}\subset S_\la$.
\end{theorem}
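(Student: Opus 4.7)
The plan is to use Theorem \ref{bp} to rewrite the left-hand side as $\IH^*_{T_n}(X_n;\C)$ and then decompose this equivariant intersection cohomology according to the stratification $X_n=\bigsqcup_B \becircled X_n^B$ from Section \ref{sec:htv}. The right-hand side is already organized by partitions: $S_n$-orbits on partitions $B$ of $[n]$ correspond bijectively to unordered partitions $\la\vdash n$ with stabilizer $S_\la$, so once the contribution of a chosen representative $B(\la)$ has been identified it must be induced from $S_\la$ up to $S_n$.

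The main local computation is to identify the contribution of a single stratum $\becircled X_n^B$ with $\ell$ blocks of sizes $|B_1|,\ldots,|B_\ell|$. Two pieces of local geometry from Section \ref{sec:htv} combine here. First, $T_n^B$ acts trivially on $\becircled X_n^B$ while the compact form of $T_n/T_n^B$ acts freely with quotient $\Conf(\ell,\R^3)$, which gives
$$H^*_{T_n}(\becircled X_n^B;\C)\;\cong\; R_{|B_1|}\otimes\cdots\otimes R_{|B_\ell|}\otimes C_\ell.$$
Second, the normal slice is $T_n^B$-equivariantly $X_{|B_1|}\times\cdots\times X_{|B_\ell|}$ with a conical $\C^\times$-action contracting each factor to its cone point, so by Künneth for IC sheaves combined with the identification $M^c_{|B_i|}\cong\IH^*_c(X_{|B_i|};\C)$ recalled just before the statement of the theorem, the equivariant IC-stalk of $X_n$ along $\becircled X_n^B$ is $\bigotimes_i M^c_{|B_i|}$. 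Multiplying the two pieces together gives the desired stratum contribution $C_\ell\otimes\bigotimes_i(M^c_{|B_i|}\otimes R_{|B_i|})$.

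To assemble these local pieces I would filter $X_n$ by closed unions of strata of low dimension, yielding a $T_n\rtimes S_n$-equivariant spectral sequence converging to $\IH^*_{T_n}(X_n;\C)$ whose $E_1$-page is the direct sum of the local contributions above; even-degree concentration of every factor, together with equivariant formality of the $T_n$-action on $X_n$, forces degeneration at $E_1$ and an $S_n$-equivariant splitting of the filtration. Grouping strata into $S_n$-orbits indexed by $\la\vdash n$ then produces the asserted induced sum, with $W_\la=\prod_i S_{m_i}$ permuting equally-sized blocks of $B(\la)$ --- hence equally-sized points of $\Conf(\ell(\la),\R^3)$ and the corresponding tensor slots --- through the embedding $W_\la\hookrightarrow S_{\ell(\la)}$, while each $S_{\la_j}\subset S_\la$ acts on its own normal-slice factor. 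The main obstacle is the equivariant IC-stalk identification: it requires Poincar\'e duality for IH (to pass from $M_{|B_i|}$ to $M^c_{|B_i|}$), Künneth for IC sheaves, and the conical structure of the normal slice, and one must be careful not to double-count the factors $R_{|B_i|}$, which come from the $T_n^B$-equivariant cohomology of the stratum rather than from the stalk. Once this bookkeeping is in place, the degeneration and equivariant splitting are routine.
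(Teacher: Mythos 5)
Your proposal is correct and follows essentially the same route as the paper: translate via Theorem~\ref{bp} to $\IH^*_{T_n}(X_n;\C)$, compute $\mathbb{H}^*_{T_n}(\becircled X_n^B;\iota_B^!IC_{X_n})$ for each stratum by factoring it as $H^*_{T_n}(\becircled X_n^B;\C)\otimes\IH^*_c$ of the normal slice (this is the paper's Lemma~\ref{stalks}), then degenerate the stratification spectral sequence by parity and reorganize the strata into $S_n$-orbits indexed by $\la\vdash n$. The only slight imprecision is calling the local contribution the ``IC-stalk'' when what is computed is the costalk $\iota_B^!IC_{X_n}$ (hence $\IH^*_c$ rather than $\IH^*$), but you correct this by invoking Poincar\'e duality, so the argument is sound.
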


\begin{remark}
We claim that Theorem \ref{recursion} provides a recursive means of computing $M_n$ for all $n\geq 2$.  To see this, we first
observe that, since $OT_n\cong R_n\otimes_\C M_n$, it is possible to recover $M_n$ from $OT_n$.
Moreover, since $M_n$ vanishes in degrees greater than $2(n-2)$, it is possible to recover $M_n$
from the truncation of $OT_n$ to degree $2(n-2)$.  If we try to use Theorem \ref{recursion} to compute $OT_n$ and $M_n$
in terms of $M_k$ for $k<n$,
we run into the problem that $M_n^c$ appears on the right-hand side of the isomorphism.  However, $M_n^c$ vanishes
in degrees less than $4(n-1)-2(n-2) = 2n$, therefore we can compute the truncation of $OT_n$ to degree $2(n-2)$
without knowing $M_n$, and we avoid any circularity.
\end{remark}

\begin{remark}
Theorem \ref{recursion} can be generalized to a recursive expression for $OT_\cA$ in terms 
$M^c_{\cA'}$ for various restrictions $\cA'$ of $\cA$ and $C_{\cA''}$ for various localizations $\cA''$ of $\cA$.
Taking $\cA$ to be a graphical arrangement, this means we may compute $OT_\Gamma$ in terms of $M^c_{\Gamma'}$
for various contractions $\Gamma'$ of $\Gamma$ and $C_{\Gamma''}$ for various subgraphs $\Gamma''$ of $\Gamma$.
\end{remark}

Let $IC_{X_n}$ be the $T_n$-equivariant intersection cohomology sheaf on $X_n$.
For each partition $B = B_1\sqcup\cdots\sqcup B_\ell$ of $[n]$, let $\iota_B:\becircled X_n^B\hookrightarrow X_n$ be the inclusion.
To prove Theorem \ref{recursion}, we first establish the following lemma.  

\begin{lemma}\label{stalks}
There exists an $S_B$-equivariant isomorphism of graded vector spaces
$$\mathbb{H}^*_{T_n}(\becircled X_n^B; \iota_B^! IC_{X_n}) \cong 
C_{\ell}
\otimes (M^c_{|B_1|}\otimes R_{|B_1|})\otimes\cdots\otimes 
(M^c_{|B_\ell|}\otimes R_{|B_\ell|}).$$
\end{lemma}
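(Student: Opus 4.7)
The plan is to reduce the hypercohomology in question to a computation on the normal slice of $\becircled X_n^B$, which by the description recalled at the end of Section \ref{sec:htv} is itself a product of smaller hypertoric varieties. First, using \cite[Lemma 2.4, Proposition 5.2]{PW07}, pick a $T_n$-equivariant tubular neighborhood $U$ of $\becircled X_n^B$ inside $X_n$ of the form $U\cong \becircled X_n^B\times N$, where $N=X_{|B_1|}\times\cdots\times X_{|B_\ell|}$, in such a way that the action of $T_n$ splits (at least after passage to the maximal compact subtorus, which is all that matters for cohomology over $\C$) as $T_n/T_n^B$ acting on the first factor and $T_n^B$ acting on the second. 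Under this identification $IC_{X_n}|_U$ corresponds to $\underline{\C}_{\becircled X_n^B}\boxtimes IC_N$, and the K\"unneth theorem for IC sheaves on products of complex varieties gives $IC_N\cong IC_{X_{|B_1|}}\boxtimes\cdots\boxtimes IC_{X_{|B_\ell|}}$.

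Since $\iota_B$ corresponds under this local identification to $\id\times \iota_{0_N}$, where $0_N=(0_1,\ldots,0_\ell)$ is the torus fixed point of $N$, equivariant excision combined with equivariant K\"unneth then produces
\[
\mathbb{H}^*_{T_n}(\becircled X_n^B;\,\iota_B^! IC_{X_n})\;\cong\; H^*_{T_n/T_n^B}(\becircled X_n^B;\C)\otimes\bigotimes_{i=1}^{\ell}\mathbb{H}^*_{T_{|B_i|}}\bigl(\{0_i\};\,\iota_{0_i}^! IC_{X_{|B_i|}}\bigr).
\]
For the first factor, $T_n/T_n^B$ acts on $\becircled X_n^B$ with maximal-compact quotient diffeomorphic to $\Conf(\ell,\R^3)$, so $H^*_{T_n/T_n^B}(\becircled X_n^B;\C)\cong H^*(\Conf(\ell,\R^3);\C)=C_\ell$. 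For each local factor at $0_i$, I use the commuting $\cs$-action on $X_{|B_i|}$ that contracts the whole variety onto $0_i$; this identifies the costalk of $IC$ at $0_i$ with $\IH^*_c(X_{|B_i|};\C)=M^c_{|B_i|}$, and passing to $T_{|B_i|}$-equivariant cohomology (where equivariant formality holds because $M_{|B_i|}$ is concentrated in even degrees, by Theorem \ref{bp} and Remark \ref{PoinMn}) gives $\mathbb{H}^*_{T_{|B_i|}}(\{0_i\};\iota_{0_i}^! IC_{X_{|B_i|}})\cong R_{|B_i|}\otimes M^c_{|B_i|}$. Combining these factors produces the claimed isomorphism of graded vector spaces.

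Equivariance with respect to $S_B$ is built into the constructions above. The subgroup $S_{|B_i|}\subset S_B$ acts only on the $i^\text{th}$ normal-slice factor $X_{|B_i|}$, producing the stated action on $M^c_{|B_i|}\otimes R_{|B_i|}$, while $W_\la=\prod S_{m_i}\subset S_B$ permutes blocks of equal size, acting on $\Conf(\ell,\R^3)$ through the embedding $W_\la\hookrightarrow S_{\sum m_i}=S_\ell$ and simultaneously permuting the equally sized tensor factors. The main technical obstacle I anticipate is setting up the equivariant local product decomposition cleanly: the extension $1\to T_n^B\to T_n\to T_n/T_n^B\to 1$ need not split algebraically, so the compatibility of the product decomposition with the $T_n$-action must be handled either at the level of the maximal compact subtorus (where splittings always exist) or by exploiting insensitivity of $\C$-coefficient equivariant cohomology to the choice of such a splitting.
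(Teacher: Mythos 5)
Your route differs from the paper's: you want to build a global $T_n$-equivariant tubular neighborhood of $\becircled X_n^B$ in product form $\becircled X_n^B\times(X_{|B_1|}\times\cdots\times X_{|B_\ell|})$ and then apply K\"unneth and excision; the paper instead observes that the equivariant local system $\iota_B^! IC_{X_n}$ on $\becircled X_n^B$ is trivial and runs a hypercohomology spectral sequence. The gap is that you assert the global product structure without justification, and it does not follow from what you cite. \cite[Lemma 2.4]{PW07} furnishes only a \emph{normal slice}, i.e., a local product near each point of $\becircled X_n^B$, and \cite[Proposition 5.2]{PW07} only identifies the compact-torus quotient of $\becircled X_n^B$ with $\Conf(\ell,\R^3)$; neither gives a global trivialization of the normal structure. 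Since $\becircled X_n^B$ is itself a compact-torus bundle over $\Conf(\ell,\R^3)$, it generally has nontrivial $\pi_1$, so the normal slice can a priori have monodromy along loops in $\becircled X_n^B$; a global product tubular neighborhood is precisely the claim that this monodromy vanishes, which is the genuine content of the lemma and needs an argument. The paper supplies one: because $\iota_B^! IC_{X_n}$ is a $T_n$-\emph{equivariant} local system, its monodromy factors through $\pi_1$ of the torus quotient, namely the simply connected space $\Conf(\ell,\R^3)$, and hence is trivial. Your write-up never invokes simple-connectedness of $\Conf(\ell,\R^3)$ at all. The obstacle you flag at the end — non-splitting of $1\to T_n^B\to T_n\to T_n/T_n^B\to 1$ — is, as you yourself suspect, only a minor technicality with $\C$-coefficients; the real missing step is the monodromy argument.
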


\begin{proof}
The cohomology of the complex $\iota_B^! IC_{X_n}$ is a $T_n$-equivariant local system on $\becircled X_n^B$
whose fiber at a point is the compactly supported cohomology of the stalk of $IC_{X_n}$
at that point.  This is the same as the compactly supported intersection cohomology of the normal slice
$X_{|B_1|}\times\cdots\times X_{|B_\ell|}$ to $\becircled X_n^B\subset X_n$.
Since the quotient of $\becircled X_n^B$ by the maximal compact subtorus of $T_n$
is homeomorphic to the simply connected space $\Conf(\ell, \R^3)$, this local system is trivial.
We therefore have a spectral sequence $E$ with
$$E_2^{p,q} = H^p_{T_n}(\becircled X_n^B; \C) \otimes \IH_c^q(X_{|B_1|}\times\cdots\times X_{|B_\ell|}; \C)$$
that converges to $\mathbb{H}^*_{T_n}(\becircled X_n^B; \iota_B^! IC_{X_n})$.  Since these cohomology groups are concentrated
in even degree, all differentials are zero, therefore
\startarray
E_\infty = E_2 &=& H^*_{T_n}(\becircled X_n^B; \C) \otimes \IH_c^*(X_{|B_1|}\times\cdots\times X_{|B_\ell|}; \C)\\
&\cong& H^*(\Conf(\ell, \R^3); \C) \otimes H_{T_n^B}^*(*; \C) \otimes \IH_c^*(X_{|B_1|}\times\cdots\times X_{|B_\ell|}; \C)\\
&\cong& C_\ell \otimes R_{|B_1|}\otimes\cdots\otimes R_{|B_\ell|} \otimes M^c_{|B_1|}\otimes\cdots\otimes M^c_{|B_\ell|}\\
&\cong& C_{\ell} \otimes (M^c_{|B_1|}\otimes R_{|B_1|})\otimes\cdots\otimes 
(M^c_{|B_\ell|}\otimes R_{|B_\ell|}).
\endarray
Since the category of graded representations of $S_B$ is semisimple, we have 
a (noncanonical) $S_B$-equivariant isomorphism of graded vector spaces
$\mathbb{H}^*_{T_n}(\becircled X_n^B; \iota_B^! IC_{X_n})\cong E_\infty$.
\end{proof}

\begin{proofrecursion}
There is a spectral sequence $E$ with $$E_1^{p,q}\;\;\; = \bigoplus_{\substack{B_1\sqcup\cdots\sqcup B_\ell = [n]\\ \ell = n-p}}
\mathbb{H}^{p+q}_{T_n}(\becircled X_n^B; \iota_B^! IC_{X_n})$$ that converges to $\IH_{T_n}^*(X_n; \C)$ \cite[Section 3.4]{BGS96}.
By Lemma \ref{stalks}, $E_1^{p,q} = 0$ unless $p+q$ is even, thus 
\begin{eqnarray*}
E_\infty\;\;=\;\; E_1 &\cong& \bigoplus_B\; \mathbb{H}^*_{T_n}(\becircled X_n^B; \iota_B^! IC_{X_n})\\
&\cong& \bigoplus_B\; 
C_{\ell}
\otimes (M^c_{|B_1|}\otimes R_{|B_1|})\otimes\cdots\otimes 
(M^c_{|B_\ell|}\otimes R_{|B_\ell|}).
\end{eqnarray*}
As a representation of $S_n$, this is isomorphic to 
$$\bigoplus_{\la\vdash n}\; \operatorname{Ind}_{S_\la}^{S_n}\left( 
C_{\ell(\la)}
\otimes(M^c_{\la_1}\otimes R_{\la_1})\otimes\cdots\otimes (M^c_{\la_{\ell(\la)}}\otimes
R_{\la_{\ell(\la)}})\right).$$
Since the category of graded representations of $S_n$ is semisimple, we have a (noncanonical)
$S_n$-equivariant isomorphism of graded vector spaces $\IH_{T_n}^*(X_n; \C)\cong E_\infty$.  The result now follows
from Theorem \ref{bp}.
\end{proofrecursion}


\subsection{Symmetric functions}
In order to implement the recursive formula in Theorem \ref{recursion} in SAGE, it is convenient to convert
everything to the language of symmetric functions.  Let $\Lambda$ be the ring of symmetric functions
in infinitely many variables with coefficients in the formal power series ring $\Z[[q]]$.  
If $V$ is a graded representation of $S_n$, concentrated in even degree, with finite dimensional graded parts,
then its {\bf graded Frobenius characteristic} $\ch V$ is an element of $\Lambda$ of symmetric degree $n$; the coefficient
of $q^i$ is equal to the usual Frobenius characteristic of $V_{2i}$.  The Frobenius characteristic map is an isomorphism
of vector spaces, thus it is sufficient to compute $\ch OT_n$ and $\ch M_n$ for each $n$.  More concretely, expressing
$M_n$ as an $\mathbb{N}[q]$-linear combination of irreducible representations is equivalent to expressing $\ch M_n$
as an $\mathbb{N}[q]$-linear combination of Schur functions.

We begin by analyzing a single summand from Theorem \ref{recursion}.
The first piece that we need to understand better is $C_{\ell(\la)}$, which is acted on by the subgroup $W_\la \subset S_\la$.
We want to decompose $C_{\ell(\la)}$ into irreducible representations for this subgroup:
$$
C_{\ell(\la)}
\;\cong \bigoplus_{\substack{(\nu_1,\ldots,\nu_n)\\ \nu_i\vdash m_i}} V_{\nu_1}\otimes\cdots\otimes V_{\nu_n}\otimes
U(\nu_1,\ldots,\nu_n),$$
where 
\startarray
U(\nu_1,\ldots,\nu_n) &:=& \Hom_{W_\la}\left(V_{\nu_1}\otimes\cdots\otimes V_{\nu_n}, C_{\ell(\la)}\right)\\
&\cong& \Hom_{S_n}\left(\Ind_{W_\la}^{S_n}\left(V_{\nu_1}\otimes\cdots\otimes V_{\nu_n}\right), C_{\ell(\la)}\right)
\endarray 
is the graded vector space that records the graded multiplicity of $V_{\nu_1}\otimes\cdots\otimes V_{\nu_n}$ in $C_{\ell(\la)}$.

Let $Y_\la$ denote the Young subgroup $\prod_{i=1}^n S_{im_i}$, so that we have $S_\la\subset Y_\la\subset S_n$.
We will break up our induction into two steps, first from $S_\la$ to $Y_\la$ and then from $Y_\la$ to $S_n$.
We have
\startarray
&& \operatorname{Ind}_{S_\la}^{S_n}\left( 
C_{\ell(\la)}
\otimes(M^c_{\la_1}\otimes R_{\la_1})\otimes\cdots\otimes (M^c_{\la_{\ell(\la)}}\otimes
R_{\la_{\ell(\la)}})\right)\\\\
&\cong& \Ind_{Y_\la}^{S_n}\operatorname{Ind}_{S_\la}^{Y_\la}\left( 
C_{\ell(\la)}
\otimes(M^c_{\la_1}\otimes R_{\la_1})\otimes\cdots\otimes (M^c_{\la_{\ell(\la)}}\otimes
R_{\la_{\ell(\la)}})\right)\\\\
&\cong& \bigoplus_{\substack{(\nu_1,\ldots,\nu_n)\\ \nu_i\vdash m_i}} U(\nu_1,\ldots,\nu_n) \otimes
\Ind_{Y_\la}^{S_n} \left(\bigotimes_{i=1}^n \Ind_{S_i\wr S_{m_i}}^{S_{im_i}}\big(V_{\nu_i}\otimes (M_i^c\otimes R_i)^{\otimes m_i} \big)\right).
\endarray
The graded Frobenius characteristic map has the following properties \cite[Sections I.7-8]{Macdonald}:
\begin{itemize}
\item $\ch V_\nu = s_{\nu}$ (irreducibles go to Schur functions)
\item if $S_n \curvearrowright V$ and $S_n\curvearrowright V'$, then $\ch(V\oplus V') = \ch V + \ch V'$
\item if $S_n \curvearrowright V$ and $S_n\curvearrowright V'$, then
$\ch (V\otimes V') = \ch V * \ch V'$ (internal or ``Kroneker" product)
\item if $S_n \curvearrowright V$ and $S_n\curvearrowright V'$, then
$H(\Hom_{S_n}\left(V, V'\right), q) = \langle \ch V, \ch V'\rangle$ (inner product)
\item if $S_i \curvearrowright V$ and $S_j\curvearrowright V'$, then
$\ch\Ind_{S_i\times S_j}^{S_{i+j}}\left(V\otimes V'\right) = \ch V \cdot \ch V'$ (ordinary product)
\item if $S_i \curvearrowright V$ and $S_j\curvearrowright V'$, then 
$\ch \Ind_{S_i\wr S_{j}}^{S_{ij}}\left(V'\otimes V^{\otimes j}\right) = \ch V'[\ch V]$ (plethysm).
\end{itemize}
The analysis that we have done in this section, combined with Theorem \ref{recursion},
gives us the following result.

\begin{proposition}\label{symmetric}
We have
$$\ch OT_n\;\; = \sum_{\substack{(\nu_1,\ldots,\nu_n)\\ \sum i|\nu_i|=n}}
\Big\langle s_{\nu_1}\cdots s_{\nu_n}, \ch C_{\sum|\nu_i|}\Big\rangle\;\;
\prod_{i=1}^n\;
s_{\nu_i}[\ch M_i^c * \ch R_i].
$$
\end{proposition}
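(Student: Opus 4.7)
My plan is to apply the graded Frobenius characteristic $\ch$ to the $S_n$-equivariant isomorphism of Theorem \ref{recursion} and translate each operation into symmetric function operations using the six properties of $\ch$ catalogued immediately before the proposition. The manipulation is essentially the computation already begun in the paragraphs preceding the statement; my job is to organize it carefully, identify the right intermediate subgroups, and verify the final reindexing. I proceed in three stages.

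First, fix a partition $\la \vdash n$ with $m_i$ parts equal to $i$. The induction $\Ind_{S_\la}^{S_n}$ appearing in Theorem \ref{recursion} factors through the intermediate Young subgroup $Y_\la = \prod_i S_{im_i}$ (induction in stages). Under $\ch$, the outer step $\Ind_{Y_\la}^{S_n}$ becomes an ordinary product of symmetric functions across the factors of $Y_\la$. This isolates, for each $i$, a representation of $S_i \wr S_{m_i}$ that I can analyze independently, provided I can disentangle the $C_{\ell(\la)}$ factor from the wreath-product factors.

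Second, the obstacle is exactly that $C_{\ell(\la)}$ couples the different values of $i$ through the diagonal subgroup $W_\la = \prod_i S_{m_i} \subset S_\la$. I resolve this by decomposing $C_{\ell(\la)}$ into $W_\la$-isotypic pieces indexed by tuples $(\nu_1,\ldots,\nu_n)$ with $\nu_i \vdash m_i$, with multiplicity spaces $U(\nu_1,\ldots,\nu_n)$ as displayed before the proposition. Since $W_\la \subset S_{\ell(\la)}$ is a Young subgroup, Frobenius reciprocity together with the induction property $\ch \Ind_{W_\la}^{S_{\ell(\la)}}(V_{\nu_1}\otimes\cdots\otimes V_{\nu_n}) = s_{\nu_1}\cdots s_{\nu_n}$ and the inner product property identifies the graded dimension of $U(\nu_1,\ldots,\nu_n)$ with $\langle s_{\nu_1}\cdots s_{\nu_n}, \ch C_{\ell(\la)}\rangle$. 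After this substitution the factor $V_{\nu_i}$ binds to the $i$th wreath-product piece, producing $\Ind_{S_i\wr S_{m_i}}^{S_{im_i}}\bigl(V_{\nu_i}\otimes(M_i^c\otimes R_i)^{\otimes m_i}\bigr)$. The Kronecker-product property converts $\ch(M_i^c\otimes R_i)$ into $\ch M_i^c * \ch R_i$, and the wreath-product induction property converts the whole factor into the plethysm $s_{\nu_i}[\ch M_i^c * \ch R_i]$.

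Finally, I reindex. The outer sum over $\la \vdash n$ is the same as a sum over multiplicity tuples $(m_1,\ldots,m_n)$ with $\sum_i i m_i = n$, and the inner sum ranges over $(\nu_1,\ldots,\nu_n)$ with $\nu_i \vdash m_i$. Since $m_i = |\nu_i|$, these two sums merge into a single sum over tuples $(\nu_1,\ldots,\nu_n)$ satisfying $\sum_i i|\nu_i| = n$, and $\ell(\la) = \sum_i |\nu_i|$, so $C_{\ell(\la)} = C_{\sum |\nu_i|}$. Collecting the ingredients yields precisely the asserted formula for $\ch OT_n$. The main obstacle throughout is bookkeeping rather than mathematical depth: one must verify that the $W_\la$-isotypic decomposition of $C_{\ell(\la)}$ distributes cleanly across the wreath-product factors and that the induction-in-stages passes correctly through $Y_\la$, after which every remaining step is a direct translation via the listed properties of $\ch$.
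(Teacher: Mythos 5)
Your proposal is correct and follows essentially the same route the paper takes: apply $\ch$ to Theorem \ref{recursion}, split the induction through the intermediate Young subgroup $Y_\la = \prod_i S_{im_i}$, decompose $C_{\ell(\la)}$ into $W_\la$-isotypic pieces with multiplicity spaces $U(\nu_1,\ldots,\nu_n)$, convert each $S_i\wr S_{m_i}$-factor via the plethysm rule, extract $H(U(\nu),q) = \langle s_{\nu_1}\cdots s_{\nu_n}, \ch C_{\ell(\la)}\rangle$ via Frobenius reciprocity and the inner-product rule, and finally merge the double sum over $\la$ and $(\nu_i)$ into the single sum indexed by tuples with $\sum i|\nu_i| = n$. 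This is exactly the analysis carried out in the paragraphs preceding the proposition, with the reindexing step (which the paper leaves implicit) spelled out.
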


Recall that $M_i^c$ is just $M_i$ ``backward", so $\ch M_i^c$ is obtained from $\ch M_i$ by replacing $q$ with $q^{-1}$
and multiplying by $q^{2(i-1)}$.
Thus, in order to use Proposition \ref{symmetric} to compute $\ch OT_n$ and $\ch M_n$ recursively, it remains
only to find explicit formulas for $\ch C_n$ and $\ch R_n$.
A formula for $C_n$ is given by Hersh and Reiner \cite[Theorem 2.7]{HershReiner},
based on the work of Sundaram and Welker \cite[Theorem 4.4(iii)]{SunWel}.
Let $\zeta_n$ be an irreducible 1-dimensional representation of the cyclic group $Z_n\subset S_n$ whose character
takes a generator of $Z_n$ to a primitive $n^\text{th}$ root of unity, and let $\ell_n := \ch \Ind_{Z_n}^{S_n}(\zeta_n)$.
Let $h_n$ denote the complete homogeneous symmetric function of degree $n$.  Then
\begin{equation}\label{cn}\ch C_n = \sum_{\la\vdash n}q^{\sum (i-1)m_i}\prod_{i=1}^n h_{m_i}[\ell_i].\end{equation}
The description of $\ch R_n$ can be found in \cite[Section 5.6]{GarsiaNotes}:
$$\ch R_n = (1-q) \sum_{\la\vdash n} s_{\la}(1,q,q^2,\ldots)s_\la.$$

\bibliography{./symplectic}
\bibliographystyle{amsalpha}

\end{document}